\newenvironment{proof}{\noindent{\bf Proof:} \hspace*{1em}}{
    \hspace*{\fill} $\square$\medskip }
\newcommand{\old}[1]{}
\newtheorem{theorem}{Theorem}[section]
\newtheorem{prop}{Proposition}
\newcommand{\eps}{\epsilon}
\newcommand{\Z}{\mathbb Z}
\newcommand{\E}{\mathbb E}
\newcommand{\Prob}{\mathbb P}
\begin{document}
\title{Monotone loop models and rational resonance}
\author{Alan Hammond\thanks{Mathematics Department,
NYU, New York.} \and Richard Kenyon\thanks{Mathematics Department,
Brown University, Providence, RI.} }
 \maketitle

\begin{abstract}
Let $T_{n,m}=\mathbb Z_n\times\mathbb Z_m$, and define a random mapping
$\phi\colon T_{n,m}\to T_{n,m}$ by $\phi(x,y)=(x+1,y)$ or $(x,y+1)$
independently over $x$ and $y$ and with equal probability.
We study the orbit structure of such ``quenched random walks'' $\phi$
in the limit $m,n\to\infty$,
and show how it depends sensitively on the ratio $m/n$.
For $m/n$ near a rational $p/q$, we show that 
there are likely to be on the order
of $\sqrt{n}$ cycles, each of length $O(n)$, whereas for 
$m/n$ far from any rational with small 
denominator, there are a bounded number of cycles,
and for typical $m/n$ each cycle has length on the order of $n^{4/3}$.
 \end{abstract}
 
\setlength{\baselineskip}{16pt}

\section{Introduction}
We study a model of monotone non-intersecting lattice paths in $\mathbb{Z}^2$. While this is a classically studied model in statistical mechanics, related to Dyson Brownian motion and random matrices,
there are few studies concerned with the influence of the boundary
conditions at the critical point of the model. The authors of \cite{KW}
studied a similar model in this setting, finding some surprising ``resonance''
phenomena, which showed how the shape of the domain---in particular
the rationality of the aspect ratio---had a strong
influence on the partition function and other observables in the model.
Here we study a model very closely related to that in \cite{KW}, 
on which we can obtain
more accurate and complete information using simple probabilistic methods.
One of our goals is to explain some of the conjectured behavior in 
\cite{KW}. However we feel that our model is of primary interest
as an example of a quenched random dynamical system for which
a fairly complete analysis can be obtained.

For positive integers $m,n$ 
let $\Gamma_{n,m}$ be the sublattice of $\Z^2$ generated
by $(n,0)$ and $(0,m)$. 
Let $T_{n,m}=\Z^2/\Gamma_{n,m}\cong \Z/n\Z\times \Z/m\Z$, the $n,m$-torus.
We will consider configurations consisting of collections of vertex-disjoint monotone lattice cycles on $T_{n,m}$. The law on such configurations that we consider has two simple definitions.

Firstly, we define $\phi: T_{n,m} \to T_{n,m}$ by setting $\phi(x,y)$ to be equal, with equal probability, either to $(x+1,y)$ or $(x,y+1)$, these choices being made independently over the $nm$ vertices of $T_{n,m}$. We call $\phi$ a {\bf quenched random walk}.
Each $\phi$ represents a dynamical system, with at least one periodic orbit. The law on disjoint unions of cycles that we consider is given by the collection of distinct orbits of this randomly selected $\phi$.  

The dynamical system $\phi$ is equivalent to another model, the {\bf cycle-rooted spanning forest}. We may make $T_{n,m}$ into a directed graph, with each vertex $(x,y)$ having two outgoing edges, that point to $(x+1,y)$ and $(x,y+1)$. A cycle-rooted spanning forest is a subgraph in which each vertex has one outgoing edge. 
The components in these subgraphs may be several, but each component contains a single directed cycle, which is topologically nontrivial. The remaining edges of the component form in-directed trees, attached to this cycle.
The uniform probability measure on cycle-rooted spanning forests is called the CRSF measure, $\mu_{CRSF}$. Each component is referred to as a cycle-rooted spanning tree. See Figure \ref{cycles}.

It is not hard to see that the cycles of the quenched random 
walk $\phi$ are precisely
the cycles in the CRSF model. We simply define $\phi(x,y)$ to be the 
vertex pointed to by the random outgoing edge from $(x,y)$ in the CRSF.

The model is closely related to the monotone non-intersecting lattice path (MNLP) model, which was studied in \cite{KW}. The state space of the measure again consists of collections of disjoint, monotone northeast-going lattice cycles. A given configuration is chosen according to a Boltzmann measure $\mu$ at temperature $T$, which is the probability measure assigning to a configuration a probability proportional to $e^{- \frac{E_b N_b + E_c N_c}{T}}$. Here, 
$E_b$ and $E_c$ are positive constants, and $N_b$ and $N_c$ denote the total number of eastgoing or northgoing steps in the configuration. 

In \cite{KW}, the behaviour of the MNLP model was examined near its critical temperature $T$, which is the temperature 
at which $e^{-E_b/T} + e^{-E_c/T} = 1$. In the critical case, a configuration is being chosen with probability proportional to $b^{N_c}c^{N_c}$, where $b,c$ satisfy $b + c = 1$. 

The two models, CRSF and MNLP, have similarities in their definition, and we will shortly explain their connection more precisely. As \cite{KW} showed, MNLP is amenable to an exact solution analysis using Kasteleyn theory. The CRSF model, on the other hand, has a dynamical definition that permits a more geometric discussion of its behaviour.

The authors of \cite{KW} determine that the behaviour of MNLP depends sensitively, for large tori, on the aspect ratio $m/n$
of the torus, with radical changes in behaviour occurring near rational values for this ratio. In Theorem 1 of \cite{KW}, for example, an asymptotic expression for the value of partition function (whose definition we will shortly give), and for the mean and variance of the number of edges present in an MNLP configuration is computed. Figure 3 of \cite{KW} illustrates a conjecture of \cite{KW}: the number of edges typically present in an MNLP configuration appears to be highly peaked when the aspect ratio is precisely equal to a given small rational, it experiences a rapid decay if the aspect ratio is slightly increased or decreased, and, strikingly, it increases again if the change in aspect ratio is further accentuated, while still remaining ``far'' from other rationals.

Consider the critical case of MNLP where $b = c  = 1/2$ (we might consider other values of $b,c$ such that $b+c=1$, but this is essentially indistinguishable, for asymptotic behaviour, from changing the aspect ratio of the torus). In this case, MNLP assigns a weight of $2^{-\vert \mathcal{C} \vert}$ to any configuration $\mathcal{C}$, a disjoint union of north- and east-going cycles,
where $|\mathcal{C}|$ is the total number of edges of $\mathcal{C}$.  
We define $Z_{MNLP}=\sum_{\mathcal{C}}2^{-|\mathcal{C}|}$, where the sum is over all 
configurations, so that
the probability of a configuration in $2^{-|\mathcal{C}|}/Z_{MNLP}$. The quantity
$Z_{MNLP}$ is the {\bf partition function} of the MNLP model.

We now discuss the relation between the two models, MNLP and CRSF.
Consider a variant of CRSF, called oriented CRSF, which is given by the uniform measure on cycle-rooted spanning forests, each of whose cycles is given an orientation (either to the northeast, or the southwest). We can exhibit a measure-preserving correspondence between MNLP and oriented CRSF. Indeed, if we take a sample of MNLP, and orient each of its cycles in the southwesterly direction, and then assign to each remaining vertex in the torus an edge pointing to the east or to the north, indepedently and with equal probability, declaring that the newly formed cycles, if any, are to be oriented in the northeasterly direction, we claim that the resulting law is the uniform measure on oriented cycle-rooted spanning forests. We see this as follows: for a given cycle-rooted spanning forest whose cycles are oriented, the procedure will alight on this configuration if, firstly, the sample of MNLP happens to pick its set $\mathcal{C}$ of southwesterly oriented cycles and, secondly, the correct choice of north or east outgoing edges is made in each of the remaining vertices. The probability of the first event is $2^{-\vert \mathcal{C} \vert}/Z_{MNLP}$, while the second then occurs with probability $2^{-(mn - \vert \mathcal{C} \vert)}$. So the probability of obtaining the given oriented cycle-rooted spanning forest, which is $2^{-mn} Z_{MNLP}^{-1}$, 
does indeed not depend on the choice of oriented CRSF. 

This argument also demonstrates that the number of oriented CRSFs is given by $2^{mn} Z_{MNLP}$. We also know that it is equal to the sum over CRSFs of $2^{\textrm{cycles}}$, or to $2^{mn} \mathbb{E}_{CRSF}(2^{\textrm{cycles}})$, since there a total of $2^{mn}$ CRSFs.    

We summarise these deductions: 
\begin{theorem}\label{MNLPthm}
The MNLP model is in measure-preserving correspondence 
with oriented CRSF, and
\begin{equation}\label{Zequality}
\E_{CRSF}(2^\mathrm{cycles})= Z_{MNLP},
\end{equation}
That is, the partition function for the MNLP model
is the expected value of $2$ to the number of components 
in the CRSF model.
\end{theorem}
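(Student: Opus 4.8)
The plan is to prove both assertions of Theorem~\ref{MNLPthm} by formalizing the coupling sketched above, namely the map $\Psi$ that sends a sample of MNLP, together with independent fair coin flips on the uncovered vertices, to an oriented CRSF. First I would pin down $\Psi$ precisely: given an MNLP configuration $\mathcal{C}$, orient each of its (vertex-disjoint, monotone) cycles to the southwest; then, on each of the $mn - \vert\mathcal{C}\vert$ vertices not covered by $\mathcal{C}$, place an outgoing edge pointing east or north, independently and uniformly; finally declare every cycle created in this second stage to be oriented to the northeast. I would then check that the output is always a genuine oriented CRSF: every vertex has exactly one outgoing edge by construction, on a finite graph with out-degree one every component contains a unique directed cycle, and any monotone cycle on $T_{n,m}$ is forced to be topologically nontrivial.

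Next I would compute the pushforward of $\mu_{MNLP}\otimes(\text{fair flips})$ under $\Psi$. For a fixed oriented CRSF $F$, I would observe that the southwest-oriented cycles of $F$ constitute a well-defined MNLP configuration $\mathcal{C}_F$, and that $\Psi$ produces $F$ if and only if the MNLP sample equals $\mathcal{C}_F$ and the second-stage flips reproduce $F$ on the complement of $\mathcal{C}_F$. The first event has probability $2^{-\vert\mathcal{C}_F\vert}/Z_{MNLP}$ and the second has probability $2^{-(mn-\vert\mathcal{C}_F\vert)}$, these being $mn-\vert\mathcal{C}_F\vert$ independent fair choices; the product is $2^{-mn}/Z_{MNLP}$, which does not depend on $F$. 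Hence the pushforward is uniform on oriented CRSFs, establishing the measure-preserving correspondence and, by summing over $F$, that the number of oriented CRSFs equals $2^{mn} Z_{MNLP}$.

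To extract the identity \eqref{Zequality}, I would count oriented CRSFs a second way. Every assignment of one outgoing edge to each of the $mn$ vertices yields a valid CRSF, so there are exactly $2^{mn}$ of them and $\mu_{CRSF}$ weights each by $2^{-mn}$; a CRSF with $k$ cycles admits exactly $2^k$ orientations, so the total number of oriented CRSFs is $\sum_F 2^{\mathrm{cycles}(F)} = 2^{mn}\,\E_{CRSF}(2^{\mathrm{cycles}})$. Equating this with $2^{mn} Z_{MNLP}$ gives $\E_{CRSF}(2^{\mathrm{cycles}}) = Z_{MNLP}$.

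The main point requiring care is the word ``correspondence'': I must argue that the decomposition of an oriented CRSF into its southwest part and its complement is forced, so that $\Psi$ is genuinely a bijection between (MNLP configuration, complementary flip pattern) pairs and oriented CRSFs — in particular that no oriented CRSF arises from two distinct MNLP inputs and that the second stage never accidentally recreates an MNLP cycle carrying the ``wrong'' (northeast) orientation. This follows because the orientation labels are assigned by fiat according to the stage in which a cycle is born, so the southwest cycles of the image are exactly the MNLP input and the northeast cycles exactly the fill-in cycles; the decomposition is therefore unique and invertible. The remaining verifications — out-degree one, existence and uniqueness of a nontrivial directed cycle in each component — are routine finite-graph facts that I would state but not belabor.
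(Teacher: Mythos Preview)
Your proposal is correct and follows essentially the same argument as the paper: define the map from (MNLP sample, independent fill-in flips) to oriented CRSF, compute that each oriented CRSF is hit with probability $2^{-mn}/Z_{MNLP}$, and then count oriented CRSFs a second way as $\sum_F 2^{\mathrm{cycles}(F)}=2^{mn}\,\E_{CRSF}(2^{\mathrm{cycles}})$. You are somewhat more explicit than the paper about why $\Psi$ is a bijection (namely, that the orientation labels record the stage of birth, so the southwest cycles recover the MNLP input uniquely), but this is only added care, not a different route.
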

In \cite{KW} it was conjectured that $Z_{MNLP}\geq2$.
This follows trivially from \eqref{Zequality}.

\begin{figure}[htbp]
\centerline{\includegraphics[height=0.51\textwidth]{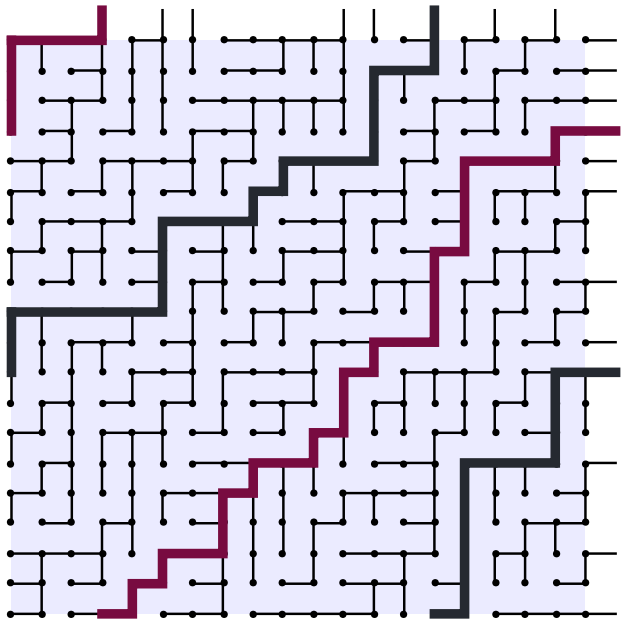}}
\caption{A quenched random walk and its orbit structure, with the
cycles highlighted.
\label{cycles}}
\end{figure}

In this paper we study the number and the length of cycles in CRSF, as well as their homology class. Our results, which we shortly outline, yield a geometric understanding of CRSF: the sharp changes in behaviour that occur near rational choices for the aspect ratio for a large torus, and the remaining generic case in which the aspect ratio is not close to any small rational.
As well as treating this simple and natural model, our results also have implications for MNLP.
A reweighting as in (\ref{Zequality}) is required to make inferences about MNLP, because our results treat the unoriented version of CRSF. It is easily seen, however, that the number of cycles present in oriented and unoriented CRSF differs by a constant factor, uniformly in $n$ and $m$. This means that, 
for example, 
we have found a qualitative explanation 
for the secondary spikes present in the mean total length of cycles in MNLP near a given rational aspect ratio that is described in Section 6 of \cite{KW}.

\begin{figure}[htbp]
\centerline{\includegraphics[height=0.41\textwidth]{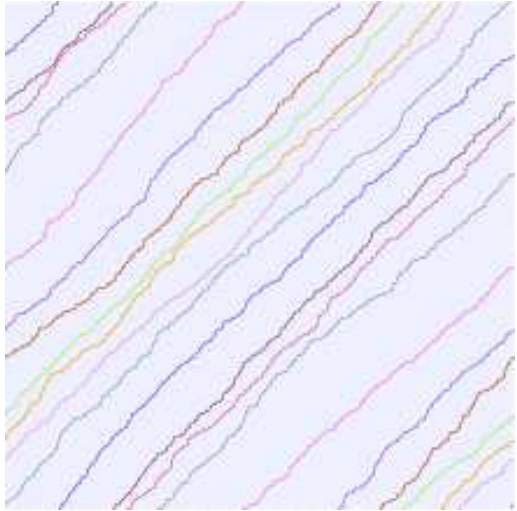}}
\caption{Cycles in a sample when $n=m=1000$.
\label{cycles10001000}}
\end{figure}

\begin{figure}[htbp]
\centerline{\includegraphics[height=0.31\textwidth]{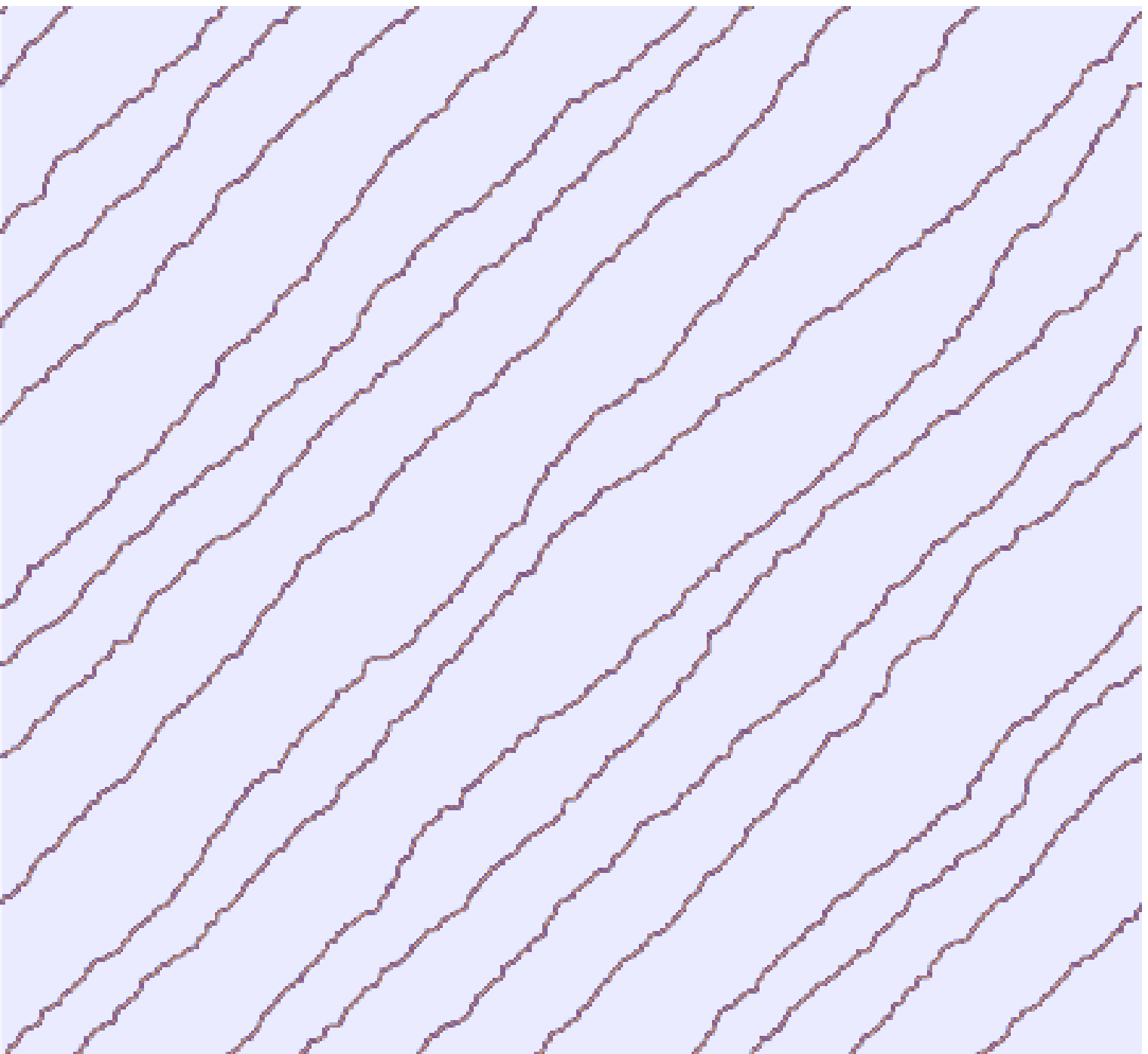}\hspace{0.5cm}\includegraphics[height=0.31\textwidth]{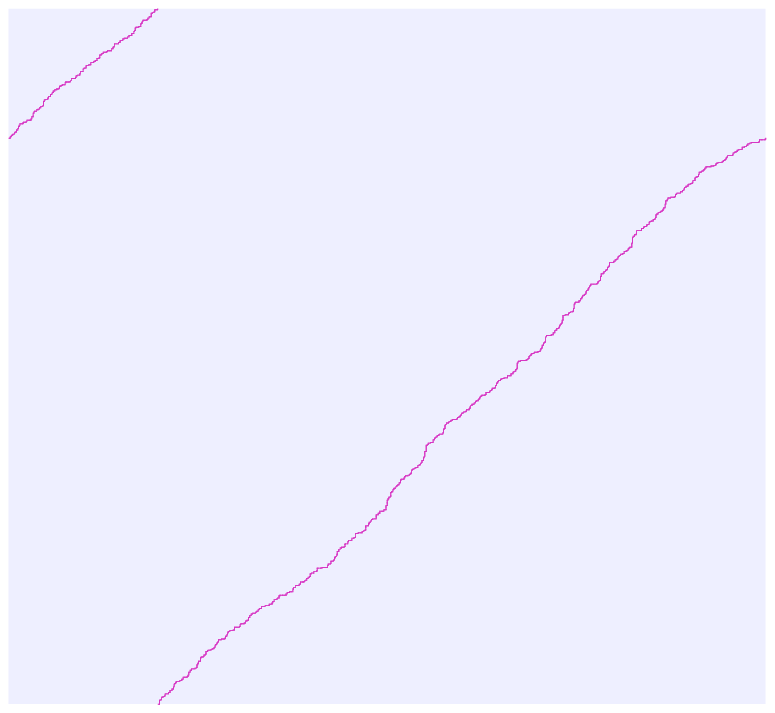}}
\caption{Cycles in two samples when $n=1090, m=1000$. In the first there is one
cycle with homology class $(9,10)$.
\label{cycles10901000}}
\end{figure}

\subsection{Outline}

In section \ref{sqrt}, we prove two propositions, 
the first detailing the behaviour of the number of cycles 
when $m - n = O(n^{1/2})$, and the second when
$n^{1/2} << \big\vert m - n \big\vert << \sqrt{2n \log n}$. 
We show that, in the first case, $\Theta(n^{1/2})$ 
cycles are likely to be present in the CRSF, while this 
number experiences a rapid decay as we enter the second case. 
In section \ref{sqrtlog}, we turn to the behaviour of the model 
when $m = n + C \sqrt{n\log n}$, where $C > \sqrt{2}$ is a fixed constant, 
showing that precisely one loop is likely to exist, and that 
this loop is global in nature, having length $n^{3/2+o(1)}$. 
In Sections \ref{p/q}, we extend these results to the case $m/n$ close to a
rational $p/q$.  In Section \ref{irrationalsection},
we deal with the case that $m/n$ is not close to a rational
with small denominator. In this case, typically, a constant number of
cycles form, each having a length of order $n^{4/3}$.
These cycles cross the torus about $n^{1/3}$ times and divide it into pieces whose widths are about $n^{2/3}$.

\subsection{Acknowledgments}

We thank Yuval Peres for comments and references.
The work of the second author was supported by NSERC
and NSF. This work was started while both authors were at the University
of British Columbia.

\subsection{Notation}

We identify $T_{n,m}$ with the rectangle $\{0,\dots,n-1\}\times\{0,\dots,m-1\}$
in $\Z^2$.  A closed orbit (or \emph{cycle}) has {\bf homology class $(p,q)$} if it crosses
any horizontal line $q$ times and any vertical line $p$ times.
We refer to such an orbit as a {\bf $(p,q)$-cycle}.
Two disjoint closed orbits necessarily have the same homology class,
and $p,q$ are necessarily relatively prime, since the orbits are simple closed curves. 
The length of a $(p,q)$-cycle is $np + mq$ edges; it necessarily has $np$ horizontal
edges and $mq$ vertical edges.

A {\bf strand} of a cycle $C = \big\{ c_0,c_1,\ldots,c_n \big\}$, with $c_n = c_0$, is 
defined to be a subpath $\big\{ c_i,c_{i+1},\ldots,c_{j-1},c_j\big\}$ between two consecutive passes of the line $x=\frac12$,
that is, such that $c_{i-1}$ has zero $x$-coordinate, $c_i$ 
has $x$-coordinate $1$, and similarly for $c_{j},c_{j+1}$. A cycle is partitioned by the set of its strands.  

For a particular realization of $\phi$,
and for $(p,q) \in \mathbb{N}^2$,
we write $N_{(p,q)}$ for the number of cycles of homology class $(p,q)$,
and  $N= \sum_{(p,q) \in \mathbb{N}^2} N_{(p,q)}$ for the total number
(note that exactly one term in the sum is nonzero). 

For each $(i,j) \in \{0,\ldots,n-1\} \times \{0,\ldots,m-1\}$, associate a random walk
$W_{i,j}: \mathbb{N} \to T_{n,m}$, starting at $(i,j)$ and
whose steps are independently up and to the right with equal probability. 
We call such a random walk an
{\bf up-right random walk}.

The CRSF can be obtained by iteratively running 
the random walks $\big\{ W_{ij} \big\}_{0\leq i \leq n-1,0 \leq j \leq m-1}$, stopping each 
when it intersects its trace or the trace of those run earlier. 
We will form the configuration in this way, or by some variation of this approach.
Although in constructing the CRSF, we have no cause to 
examine the walks after they intersect themselves, the 
independence properties of the walks without stopping 
will be useful in the proofs of the propositions.

Similarly to the definition of a strand of a cycle, we say that a walk $W_{x,y}$ performs a traversal on the interval $\big\{ t_1,\ldots t_2 - 1 \big\}$ if the $x$-coordinate of $W_{x,y}$ is $m-1,0,m-1,0$
at times $t_1-1,t_1,t_2-1,t_2$ respectively (the first condition we omit if $t_1 = 0$), and $t_2$ is the first return to the line $x=0$ after $t_1$.
That is, a traversal is a horizontal crossing of the torus by the walk.

\section{The primary spike and the valley when $m\approx n$}\label{sqrt}

We discuss in this section the case when $m-n=O(\sqrt{n})$.
In this case there are many $(1,1)$-cycles. This case is generalized to 
$m/n\approx p/q$ in Section \ref{p/q}.

\begin{prop}\label{propone}
Let $\rho\in(0,\infty)$ be fixed and $m = n + \rho \sqrt{n}(1+o(1))$.
There exists $c = c(\rho) > 0$ with the following property. 
The probability that each closed orbit has homology class $(1,1)$ exceeds $1 - \exp \big\{ - c n^{1/2} \big\}$. The number $N_{(1,1)}$ of such orbits
satisfies
$$
\Prob\Big(N_{(1,1)} > cn^{1/2}\Big) \geq 1 - \exp \Big\{ - c n^{1/2} \Big\}
$$
for all sufficiently large $n$.
Moreover there exists a (large) $C>0$ so that we have
$$
\Prob\Big(N_{(1,1)} > C n^{1/2}\Big)  \leq \exp \Big\{ - C^{-1} n^{1/2} \Big\}   
$$
for all sufficiently large $n$.
\end{prop}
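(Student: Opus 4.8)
The plan is to pass to the first‑return map of $\phi$ to the column $\{x=0\}$, to reduce the number of $(1,1)$‑cycles to the number of zeros of a stationary one–dimensional ``sawtooth'' process, and then to prove both tail bounds by exploring the coalescing traversals in order of starting height and invoking martingale concentration. For the reduction, let $\psi\colon\Z/m\Z\to\Z/m\Z$ be defined by following $\phi$ from $(0,y)$ until its next visit to column $0$, which occurs after exactly $n$ eastward steps; then $\psi(y)=y+V_y\bmod m$, where $V_y\ge 0$ is the number of northward steps of that traversal. Since $\phi$ increases a coordinate at every step, every closed orbit meets column $0$ unless it is confined to a single column, and such a purely northward cycle forces an entire column of $\phi$ to point north, an event of probability at most $n2^{-m}=\exp(-\Omega(n))$. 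Off that event the closed orbits of $\phi$ are in homology‑respecting bijection with the cycles of $\psi$ (a $\psi$‑cycle of length $p$ giving a $(p,q)$‑cycle with $qm=\sum_{\mathrm{orbit}}V_y$), and since disjoint orbits share a homology class, all $\psi$‑cycles have one common length $p$; thus a single fixed point of $\psi$ already forces $p=1$ and makes every orbit a $(1,1)$‑cycle. Together with the Chernoff bounds $\Prob(V_y=0)\le 2^{-n}$, $\Prob(V_y\ge 2m)\le e^{-cn}$ along one traversal, this shows that up to an event of probability $\exp(-\Omega(n))$ we have $N_{(1,1)}=\#\{y\in\{0,\dots,m-1\}:V_y=m\}$, and on the event this set is nonempty every closed orbit is a $(1,1)$‑cycle.

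A single traversal explores $\phi$ along an incrementally revealed path, so $V_y\sim\mathrm{NegBin}(n,\tfrac12)$; the local CLT gives $\Prob(V_y=m)=\Theta(n^{-1/2})$ with constants depending on $\rho$, whence $\E N_{(1,1)}=m\,\Prob(V_0=m)=\Theta(n^{1/2})$. Lift $h(y)=y+V_y$ to $h\colon\Z\to\Z$: two traversals started at ordered heights use the same $\phi$ and hence coincide once their paths meet, so $h$ is nondecreasing with $h(y+m)=h(y)+m$, a step function whose maximal intervals of constancy --- the ``teeth'' --- are exactly the coalescence classes of the traversals. The vertical gap between the traversals from $(0,y-1)$ and $(0,y)$, read at successive eastward steps, performs a centred random walk with exponential tails killed on reaching $0$ and run for $n$ steps; hence these traversals fail to coalesce --- equivalently a new tooth opens at $y$ --- with probability $\Theta(n^{-1/2})$, and when this happens $h$ jumps by $\Theta(n^{1/2})$. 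So the teeth have typical length $\Theta(n^{1/2})$, there are $\Theta(n^{1/2})$ of them around the torus, and, writing $X_y=V_y-m$ (a stationary process decreasing by $1$ inside a tooth and jumping up by $\Theta(n^{1/2})$ at a boundary), one obtains the identity $N_{(1,1)}=\#\{y:X_y=0\}=\#\{\text{teeth whose }X\text{-range contains }0\}$; in particular $N_{(1,1)}\le\#\{\text{teeth}\}$.

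For the tail bounds, explore the traversals from $(0,0),(0,1),\dots$ in order, let $\mathcal F_y$ record the first $y$ of them, and call a traversal \emph{regular} if its northward steps are spread evenly enough among its columns for the killed‑gap‑walk comparison to apply; a union bound over the $m$ traversals makes ``some traversal irregular'' an event of probability $\exp(-\Omega(n^{1/2}))$, off which the $\mathcal F_{y-1}$‑conditional probability that a tooth opens at $y$ lies in $[c_-n^{-1/2},c_+n^{-1/2}]$. Thus $\#\{\text{teeth}\}$ is a sum of indicators with conditional means of order $n^{-1/2}$, and Freedman's martingale inequality gives $\#\{\text{teeth}\}\in[c'n^{1/2},Cn^{1/2}]$ with probability $\ge 1-\exp(-cn^{1/2})$; its upper half is the claimed upper bound on $N_{(1,1)}$. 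For the matching lower bound one must see that a constant fraction of teeth straddle level $m$: a tooth opened at $y$ does so once its width exceeds $X_y=h(y)-y-m$, so with the $\mathcal F_y$‑measurable $B_y=\mathbf 1[\text{a tooth opens at }y]\,\mathbf 1[0\le X_y\le c''n^{1/2}]$ one checks --- from the killed‑gap‑walk description of the next jump and the geometric‑type law of a tooth's width --- that $\Prob(\text{the tooth at }y\text{ straddles }m\mid\mathcal F_y)\ge c_0 B_y$ on the regular event, while $\sum_y B_y\ge c_1 n^{1/2}$ with probability $\ge 1-\exp(-cn^{1/2})$ by a further Freedman estimate (the summands have conditional means of order $n^{-1/2}$ whenever $X_{y-1}$ lies in its typical $O(n^{1/2})$‑window, the atypical $y$ being absorbed into the regularity event). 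Two more applications of Freedman's inequality --- to $\sum_y B_y$ and to $\sum_y\mathbf 1[\text{the tooth at }y\text{ straddles }m]$ --- then yield $N_{(1,1)}\ge cn^{1/2}$ with probability $\ge 1-\exp(-cn^{1/2})$, and, through the reduction step, also that every closed orbit is a $(1,1)$‑cycle off an event of that probability.

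The delicate part, and the main obstacle, is the lower bound: it requires uniform \emph{two‑sided} control of the conditional chance that an exploration step opens a tooth of the right starting height \emph{and} width, hence a quantitative study of the killed gap‑walk (a local limit theorem for its endpoint conditioned to survive, plus exponential tail bounds for the tooth widths), together with the verification that the various ``regularity'' exceptional events genuinely have probability $\exp(-\Omega(n^{1/2}))$ rather than merely $o(1)$ --- equivalently, a quantitative mixing statement saying the stationary sawtooth $X$ forgets its past within $O(n^{1/2})$ steps, so that counts of teeth inside a height window concentrate. The upper bound, by contrast, needs only the soft one‑sided estimate that a tooth opens with conditional probability at most $Cn^{-1/2}$.
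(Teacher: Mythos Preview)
Your framework via the return map $\psi$ and the tooth decomposition of its monotone lift $h$ is sound and genuinely different from the paper's argument, but the lower bound carries a real gap that you yourself flag without resolving. The crux is the claim that one can define ``regular'' so that (i) the event ``some traversal is irregular'' has probability $\exp(-\Omega(n^{1/2}))$ after a union bound over $m$ starting heights, and simultaneously (ii) on the regular event the $\mathcal F_{y-1}$--conditional probability that a tooth opens at $y$ lies in $[c_-n^{-1/2},c_+n^{-1/2}]$. These pull against each other. Non--coalescence of the traversal from $y$ with the already--revealed one from $y-1$ is governed by a gap process $G(k)$ satisfying $G(0)=1$, $G(k+1)=G(k)+Y_k-d_k$ with $Y_k$ fresh geometrics and $d_k$ the (revealed) up--steps of the previous traversal at column $k$; survival requires $G(k)>d_k$ for every $k$. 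To get the \emph{lower} bound $c_-n^{-1/2}$ you need pointwise control of the $d_k$ (a single column with $d_k$ of order $n^{1/2}$ kills the gap almost surely), yet forcing $\max_k d_k$ to be bounded over all $mn$ column--traversal pairs with failure probability $\exp(-\Omega(n^{1/2}))$ would need the bound to be of order $n^{1/2}$, which is exactly too large. Partial--sum regularity $|\sum_{j\le k}d_j-k|\le K\sqrt n$ survives the union bound but does not by itself yield the survival lower bound, because the gap condition is $G(k)>d_k$, not $G(k)>0$. Your closing paragraph correctly identifies that this is the obstacle, but the proposal does not overcome it.

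The paper avoids this dependence issue entirely. For the lower bound it partitions the torus into $\Theta(\sqrt n)$ disjoint strips of width $2K\sqrt n$ parallel to the line $y=(m/n)x$, and launches a fresh walk from the centre of each strip. The event $E_i$ that the $i$th walk stays inside its strip for two circuits and overshoots then undershoots its start (hence closes a $(1,1)$--cycle inside $A_i$) has probability bounded below by a fixed $p(\rho)>0$ via Donsker's theorem; because the strips are disjoint, the $E_i$ are \emph{independent}, and the number of $(1,1)$--cycles stochastically dominates a $\mathrm{Bin}(\lfloor m/(2K\sqrt n)\rfloor,p)$ random variable, giving the $\exp(-c\sqrt n)$ concentration immediately. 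No conditional analysis, no Freedman, no regularity event. For the upper bound the paper runs the single walk $W_{0,0}$ and records the successive maxima $\{u_i\}$ and minima $\{v_i\}$ of its return heights to $x=\tfrac12$, argues that every cycle except possibly the last is formed at a recorded time, and then bounds the number of records by noting that each increment $u_{i+1}-u_i$ exceeds $\sqrt n$ with uniformly positive probability---so $J_1,J_2\le C\sqrt n$ except with probability $\exp(-C^{-1}\sqrt n)$. This is again unconditional.

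In short: your reduction to teeth is attractive and the upper half of your argument can likely be pushed through, but the two--sided conditional control needed for the lower bound is not established, and the paper's strip decomposition shows why one should not fight that battle---disjoint strips manufacture the independence you are trying to recover through martingale estimates.
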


\begin{proof} 
Let $K>0$ be large but fixed as $n\to\infty$. We partition the torus into strips of width $2K\sqrt{n}$, parallel
to the closed path $y=(m/n)x$.
That is, let $A_i, i \in \big\{ 0,\ldots, \lfloor \frac{m}{2K\sqrt{n}} \rfloor - 1 \big\}$
denote the set of vertices in the $i$th strip:
\begin{displaymath}
  A_i = \Big\{ (x,y) \in T_{n,m}: 2K n^{1/2} i \leq \vert y - (m/n)x \vert 
 < 2K n^{1/2} (i+1)  \Big\}.
\end{displaymath} 
The last strip, $A_{\lfloor \frac{m}{2K\sqrt{n}} \rfloor}$, may be thinner than the others, but this does not matter for our purpose. 

We will form the CRSF configuration in the following way. Let $z_i = \lfloor K n^{1/2} (2i+1) \rfloor$ be the $y$-coordinate of the point on the line $x=0$ in the 
center of the $i$th strip. 
We will run the walks $W_{0,z_i}$, $i=1,2,\dots$ in increasing order, 
stopping any such walk at the stopping time $\sigma_i$, where
$$
 \sigma_i = \min \Big\{ j \geq 0 : W_{0,z_i}(j) \not\in A_i, \textrm{ or   $W_{0,z_i}(j) = W_{0,z_i}(t)$ for some $t < j$} \Big\}
$$
denotes the first time at which $W_{0,z_i}$ either leaves $\partial A_i$ or hits its own trace. 

After these segments of random walks have been run, we choose an arbitrary order of successive sites as the initial locations of further random walks, until a cycle-rooted 
spanning forest has been determined.

Let  $E_i, i \in \big\{ 0,\ldots,\lfloor \frac{m}{2K\sqrt{n}} \rfloor \big\}$
denote the event that  the walk $W_{0,z_i}$ remains in $A_i$ during its first two traversals, and that its first return to the line $\{ x = 0 \}$ occurs at a $y$-coordinate exceeding $z_i$, while its second return 
has a $y$-coordinate at most $z_i$. We claim that
$\mathbb{P}(E_i) > c = c(\rho) > 0$.
Indeed, let $\big\{ X_j: j \in \mathbb{N} \big\}$ denote the number of upward displacements made by $W_{0,z_i}$ between its $j-1$-st and $j$-th rightward displacements. The event that $W_{0,z_i}$ remains in $A_i$ during its first two traversals occurs precisely when
\begin{equation}\label{xicond}
 \Big\vert  \sum_{k=1}^j X_k \, - \frac{mj}{n} \Big\vert
 \leq K n^{1/2}
\end{equation}
for each $j \in \big\{ 1,\ldots, 2n \big\}$. The condition on $m$ implies that
$\big\vert \frac{mj}{n} - j \big\vert \leq 2 \sqrt{n} \rho \big( 1 + o(1) \big)$ for such values of $j$,  
from which we learn that (\ref{xicond}) is implied by the inequalities 
\begin{equation}\label{neweqn}
 \Big\vert  \sum_{k=1}^j X_k \, -  j \Big\vert
 \leq \big(K - 3 \rho \big) n^{1/2}
\end{equation}
each being satisfied, for $j \in \{ 1,\ldots, 2n \}$.
For the occurrence of $E_i$, we require in addition that
\begin{equation}\label{xknm}
 \sum_{k=1}^n X_k \, - \, m \in \Big[ 0, K n^{1/2} \Big]
\end{equation}
and that
\begin{equation}\label{xktnm}
 \sum_{k=1}^{2n} X_k \, - \, 2m \in \Big[ - K n^{1/2}, 0  \Big].
\end{equation}

 Equivalent to (\ref{neweqn}) is the assertion that the random walk $\sum_{i=1}^j {2^{-1/2}(X_i - 1)}$, that has a step distribution with mean zero and variance one, has a maximum absolute value of at most $(K - 3 \rho)(n/2)^{1/2}$, among $j \in \{ 1,\ldots, 2n \}$. As (\ref{neweqn}) is sufficient for (\ref{xicond}), so are (\ref{xknm})
and (\ref{xktnm}) implied by
\begin{equation}\label{sqone}
 \sum_{k=1}^n 2^{-1/2} \big( X_k - 1 \big) \in \Big[  \frac{3 \rho}{\sqrt{2}} n^{1/2}, \frac{K - 3\rho}{\sqrt{2}} n^{1/2} \Big]
\end{equation} 
and
\begin{equation}\label{sqtwo}
 \sum_{k=1}^{2n} 2^{-1/2} \big( X_k - 1 \big) \in \Big[ - \frac{K - 3\rho}{\sqrt{2}} n^{1/2}  , - \frac{3 \rho}{\sqrt{2}} n^{1/2} \Big].
\end{equation} 
We have then that $\mathbb{P}(E_i) \geq p_n$, where $p_n$ is the probability that each of the conditions (\ref{neweqn}), (\ref{sqone}) and (\ref{sqtwo}) is satisfied.
By Donsker's theorem (\cite{Durrett}, page 365), we have that $p_n \to p$ as $n \to \infty$, where 
\begin{equation}\label{bbeqn}
p = \mathbb{P}\Big( \Big\{ \sup_{t \in [0,2]}\big\vert B(t) \big\vert \leq \frac{K - 3 \rho}{\sqrt{2}} \Big\}  \cap \Big\{ B(1) \in \Big[  \frac{3 \rho}{\sqrt{2}} , \frac{K - 3 \rho}{\sqrt{2}} \Big] \Big\} \cap \Big\{  B(2) \in \Big[  -  \frac{K - 3 \rho}{\sqrt{2}} , \frac{- 3 \rho}{\sqrt{2}} \Big]  \Big\} \Big),
\end{equation}
with $B:[0,\infty) \to \mathbb{R}$ denoting a standard Brownian motion. 
Note that $p > 0$: 
the event
$$
 \Big\{ B(1) \in \Big[  \frac{3 \rho}{\sqrt{2}} , \frac{K - 3 \rho}{\sqrt{2}} \Big] \Big\} \cap \Big\{  B(2) \in \Big[  -  \frac{K - 3 \rho}{\sqrt{2}} , \frac{- 3 \rho}{\sqrt{2}} \Big]  \Big\}
$$
occurs with positive probability, because $B(1)$
and $B(2) - B(1)$
are independent normal random variables.
Conditionally on the pair 
$\big( B(1),B(2) \big)$ taking a given value in the set 
$\big[ \frac{3 \rho}{\sqrt{2}}  , \frac{K - 3 \rho}{\sqrt{2}}   \big] \times 
 \big[   -  \frac{K - 3 \rho}{\sqrt{2}} , \frac{- 3 \rho}{\sqrt{2}}  \big]$,
there is a uniformly positive probability that the first condition in the event on the right-hand-side of (\ref{bbeqn}) is satisfied, as we see from  
 the law of the maximum of a Brownian bridge (\cite{Durrett}, exercise 8.2, page 391).

Thus, indeed, each event $E_i$ has a positive probability $>c(\rho)$, bounded below independently of $n$ and $i$.

If $E_i$ occurs, then, at the stopping time $\sigma_i$, the walk $W_{0,z_i}$ hits its own trace. The choice of the order of the walks in the formation of the CRSF configuration ensures that this event produces a $(1,1)$-cycle in $A_i$. 
Moreover, the events $E_i$ are pairwise independent. 
Thus, the number $N_{(1,1)}$ of $(1,1)$-cycles is 
bounded below by a binomial random variable with parameters $\lfloor \frac{m}{2K\sqrt{n}} \rfloor$ and $c(\rho)$. If the configuration contains one $(1,1)$-cycle, then all the other cycles are also of this type, so the absence of a $(1,1)$-cycle implies that $E_i$ does not occur for any $i \in \big\{ 0,\ldots, \lfloor m/(2Kn^{1/2}) \rfloor - 1 \big\}$.
We infer the first two statements of the proposition.

To treat the last assertion, we form the CRSF by running the random walk $W_{00}$ and adding an edge $(x,y)(x',y')$ traversed by $W_{00}$ to the configuration on each occasion for which the site $(x,y)$ is visited by $W_{00}$ for the first time, 
until a cycle-rooted spanning forest is formed. We may assume
that at least one $(1,1)$-cycle is formed, or, equivalently, that every cycle in the configuration is a $(1,1)$-cycle.

Define the wraparound time of the walk $W_{00}$ to be the earliest time $t$ such that the set $W_{00}[0,t]$ of vertices visited by the walk up to time $t$ has the property that every $(1,1)$-cycle in $T_{n,m}$ intersects $W_{00}[0,t]$.

We record the successive maxima and minima of the $y$-coordinate of the intersection of the walk $W_{00}$ with the line $\big\{x = \frac12\big\}$ (i.e. the first horizontal
step after each visit to the line $x=0$).
Let $u_0 = X_1$ be the $y$-coordinate of the walk on the first occasion 
that it crosses the line $x=\frac12$. 
When the walk next crosses the line $\big\{ x=\frac12 \big\}$, its $y$-value, which, with the natural choice of shift by a multiple of $m$, we take to be $\sum_{i=1}^{n+1}{X_i} \, - m$, may or may not exceed $u_0$.
If it is greater than $u_0$, we record its value as $u_1$, and, if it is smaller than $u_0$, we record it as $v_1$. We do not set the value of either $u_1$ or $v_1$ if   $\sum_{i=1}^{n+1}{X_i} \, - m$ equals $u_0$. 
The $y$-value of the walk on the occasion of the $k$-th return to the line $\big\{ x = 1/2 \big\}$ is given by  $\sum_{i=1}^{kn+1}{X_i} \, - km$.
We iteratively record the successive maxima of these statistics as $u_2,u_3,\ldots$ and the successive minima as $v_2,v_3,\ldots$. 

We no longer  record either maxima or minima on a return to $\big\{ x = 1/2 \}$ if this return occurs after the wraparound time.
Let $\{ u_0,\ldots, u_{J_1} \}$ and $\{ v_1,\ldots,v_{J_2} \}$ denote the final record. Let $Q$ denote the set of horizontal edges crossing $\{ x = 1/2 \}$
that are traversed by the walk at one of the recorded times.

By assumption, in each tree $T$ of the CRSF configuration lies a unique 
$(1,1)$-cycle,
and in this cycle
lies a unique horizontal edge 
$e = e(T)$ that crosses the line $\{ x = 1/2 \}$. 
Let $E$ denote the set of edges of the form $e(T)$ for some tree $T$ in the configuration. Let $e_0 \in E$ be the element in $E$ lying in the cycle in the configuration which is the last to be formed.

We claim that
\begin{equation}\label{ceeq}
E \setminus \big\{ e_0 \big\} \subseteq Q.
\end{equation}
To see this, note that if a $(1,1)$-cycle $C$ lies in the configuration, there exists a vertex $c \in C$ and $t,s \in \mathbb{N}$, $t < s$,
such that the walk $W_{00}$ makes its first and second visits to $c$ at times $t$ and $s$, the set  $W_{00}\big[ t+1,s \big]$ is the vertex set of $C$, and
\begin{equation}\label{wwdist}
  W_{00}\big[ t,s \big] \cap W_{00} \big[ 0,t-1 \big] = \emptyset.
\end{equation} 
 We call $t$ the start-time of the cycle $C$, and s, the end-time.

Note that the start-time of $C$ necessarily occurs before the wraparound time. 
The intervals of time during which distinct cycles of the CRSF configuration form being disjoint, we learn that every cycle except possibly that which forms last has an end-time that occurs before the wraparound time.

We claim that the cycle $C$, whose vertex set is $W_{00}[t+1,s]$,
either lies above or below the configuration $W_{00}[0,t-1]$
present prior to its formation. More precisely, the $y$-coordinate of 
every vertex $W_{00}(n)$,
$n \in \{t,\ldots,s \}$, exceeds the maximum $y$-coordinate of any
$W_{00}(m)$, $m \in \{0,\ldots,t-1 \}$, sharing its $x$-coordinate,
or the $y$-coordinate of 
every such vertex is less than the corresponding minimum. 
Indeed, this statement is readily verified from (\ref{wwdist}),
with the aid, for example, of the intermediate value theorem.

We have shown that every edge $e \in E$, except possibly $e_0$,
is traversed before the wraparound time, at a time which is recorded on either the $\{ u_i \}$ or $\{ v_i \}$ list. That is, we have obtained (\ref{ceeq}).

Note then that
\begin{equation}\label{fprw}
N  = \vert E \vert \leq  \vert Q \vert + 1 =  J_1+J_2 + 2,
\end{equation}
the inequality by (\ref{ceeq}).

We now show that, if $\rho = 0$, then for $i \in \{ 1,2 \}$, and if $\rho > 0$, for $i=1$,
\begin{equation}\label{riest}
 \mathbb{P} \big(  J_i > C n^{1/2} \big) \leq 
 \exp \Big\{ - C^{-1} n^{1/2} \Big\},
\end{equation}
for $C$ sufficiently large. Indeed, the sequence of increments $\{ u_{i+1} - u_i : i \in \mathbb{N} \}$ consists of independent random variables, each of which has, for small enough $c$, probability at least $c$ of exceeding $n^{1/2}$. 
To see this, note that the value $u_{j+1}$ will be recorded on the first return to $\{ x = 1/2 \}$  after that at which $u_{j}$ is recorded, provided that this return occurs at a higher value of $y$, which occurs with probability at least $2^{-1}\big( 1 + o(1) \big)$, in which case, the difference $u_{j+1} - u_j$ will exceed $n^{1/2}$ with positive probability, by the central limit theorem. 
If  a proposed entry $u_j$ exceeds $m$, then
the wraparound time has already occurred, and the entry is not recorded.
We see that 
(\ref{riest}) follows. 

Suppose now that $\rho > 0$. 
Then for any $k$,
\begin{equation}\label{qest}
\mathbb{P}\big( J_2 > k \big) \leq \gamma^k,
\end{equation}
for some $\gamma = \gamma(\rho) \in (0,1)$. Indeed, it is readily seen that a new term is added to the $v$-sequence, independently of its history, with a probability that is bounded away from one. So $J_2$ satisfies (\ref{riest}) in this case also.  

This completes the proof of the last assertion of the proposition. 
\end{proof}

The number of cycles experiences a rapid decline as the value of 
$m$ is increased beyond that treated in Proposition \ref{propone}.

\begin{prop}\label{proptwo}
We set $m = n + \lfloor C \sqrt{n\log n} \rfloor$, for $C \in (0,\infty)$ a fixed constant. Then
$$
 \E\big(N_{(1,1)}\big) = \frac1{2\sqrt{\pi}}n^{\frac{2-C^2}{4}}\bigg( 1+ 
   O \Big( \frac{(\log n)^3}{n^{1/2}} \Big) \bigg).
$$
\end{prop}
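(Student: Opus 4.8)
The plan is to compute $\E(N_{(1,1)})$ exactly by a first-moment identity, and then read off the asymptotics from Stirling's formula. First I would observe that for any fixed simple up-right closed curve $C$ of homology class $(1,1)$ in $T_{n,m}$, the event that $C$ is one of the orbits of $\phi$ is precisely the event that at each of the $n+m$ vertices of $C$ (these are distinct, $C$ being simple) the outgoing edge selected by $\phi$ lies along $C$; as these selections are independent and fair, this event has probability $2^{-(n+m)}$. Since the orbits of $\phi$ are by definition vertex-disjoint, linearity of expectation gives
\[
\E\bigl(N_{(1,1)}\bigr)=2^{-(n+m)}\cdot\#\{(1,1)\text{-cycles in }T_{n,m}\}.
\]

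Next I would count the $(1,1)$-cycles. Such a cycle $C$ crosses each vertical line $x=k+\tfrac12$ ($k=0,\dots,n-1$) exactly once, say by the edge from $(k,h_k)$ to $(k+1,h_k)$ with $h_k\in\Z/m\Z$; after this edge $C$ rises through $(k+1,h_k),(k+1,h_k+1),\dots,(k+1,h_k+g_k)=(k+1,h_{k+1})$ before crossing $x=k+\tfrac32$, where $g_k:=(h_{k+1}-h_k)\bmod m$ (indices mod $n$, with $h_n:=h_0$), and $\sum_{k=0}^{n-1}g_k=m$ since this is the total number of vertical edges. The vertices of $C$ in the column $x\equiv k+1\pmod n$ are then $g_k+1$ consecutive residues mod $m$, hence distinct exactly when $g_k\le m-1$; as the columns sit at distinct residues mod $n$, $C$ is embedded iff $g_k\le m-1$ for every $k$. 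Thus $C\mapsto\bigl(h_0;(g_0,\dots,g_{n-1})\bigr)$ is a bijection onto the pairs with $h_0\in\Z/m\Z$, $g_k\ge 0$, $\sum_k g_k=m$, $\max_k g_k\le m-1$; since there are $\binom{m+n-1}{n-1}$ nonnegative solutions of $\sum_k g_k=m$, exactly $n$ of which have some $g_k=m$, we get $\#\{(1,1)\text{-cycles}\}=m\bigl(\binom{m+n-1}{n-1}-n\bigr)$ and hence
\[
\E\bigl(N_{(1,1)}\bigr)=m\,2^{-(n+m)}\Bigl(\tbinom{m+n-1}{n-1}-n\Bigr).
\]

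For the asymptotics I would set $d=m-n=\lfloor C\sqrt{n\log n}\rfloor$, note that $mn\,2^{-(n+m)}$ is exponentially small and absorbable into the claimed error, and apply Stirling's formula to $\binom{m+n-1}{n-1}$, which is the binomial coefficient of $m+n-1$ at distance $(d+1)/2$ from the centre. This gives $\binom{m+n-1}{n-1}=\dfrac{2^{m+n-1}}{\sqrt{\pi n}}\exp\!\bigl(-\tfrac{(d+1)^2}{2(m+n-1)}\bigr)\bigl(1+O(\tfrac{(\log n)^2}{n})\bigr)$. Using $d^2=C^2n\log n+O(\sqrt{n\log n})$ one expands $\tfrac{(d+1)^2}{2(m+n-1)}=\tfrac{C^2}{4}\log n+O\bigl(\tfrac{(\log n)^{3/2}}{\sqrt n}\bigr)$, so $\exp(-\tfrac{(d+1)^2}{2(m+n-1)})=n^{-C^2/4}\bigl(1+O(\tfrac{(\log n)^{3/2}}{\sqrt n})\bigr)$. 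Combining this with $2^{-(n+m)}2^{m+n-1}=\tfrac12$ and $m=n+d=n\bigl(1+O(\sqrt{\log n/n})\bigr)$ yields
\[
\E\bigl(N_{(1,1)}\bigr)=\frac{m}{2\sqrt{\pi n}}\,n^{-C^2/4}\Bigl(1+O\bigl(\tfrac{(\log n)^{3}}{\sqrt n}\bigr)\Bigr)=\frac1{2\sqrt\pi}\,n^{\frac{2-C^2}{4}}\Bigl(1+O\bigl(\tfrac{(\log n)^{3}}{\sqrt n}\bigr)\Bigr).
\]

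The conceptual content is entirely in the first two paragraphs; the delicate step is the last one, and this is where I expect the main obstacle. Because the exponent $\tfrac{(d+1)^2}{2(m+n-1)}$ is of order $\log n$, multiplicative errors carrying logarithmic factors are not negligible, so one must push the Stirling expansion far enough — in particular keeping the subleading $-\tfrac{d^3}{8n^2}$-type contribution in $\tfrac{(d+1)^2}{2(m+n-1)}$ and verifying that every discarded term is $O((\log n)^3 n^{-1/2})$ (the true order of the error being in fact $(\log n)^{3/2}n^{-1/2}$).
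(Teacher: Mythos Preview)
Your proof is correct. The exact identity $\E(N_{(1,1)})=m\,2^{-(n+m)}\bigl(\binom{m+n-1}{n-1}-n\bigr)$ is right, the bijection you describe between $(1,1)$-cycles and pairs $(h_0;g_0,\dots,g_{n-1})$ is valid, and the Stirling estimate with the error tracking you outline goes through (indeed, as you note, the true error is $O((\log n)^{3/2}n^{-1/2})$, and the cubic logarithm in the statement is slack).

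The paper takes a somewhat different route. Rather than summing over all possible cycles, it sums over the $m$ edges $(0,y)(1,y)$ and uses vertical translation invariance to write $\E(N_{(1,1)})=m\,\Prob(Z=m)$, where $Z$ is the vertical displacement of the trajectory from $(1,0)$ at its first return to the line $x=\tfrac12$; this $Z$ is then recognised as a sum of $n$ i.i.d.\ geometric random variables of mean~$1$ and variance~$2$, and the asymptotics are read off from Richter's local limit theorem for moderate deviations rather than from Stirling. Of course the two computations meet in the middle, since $\Prob\bigl(\sum_{i=1}^n Y_i=m\bigr)=2^{-(n+m)}\binom{m+n-1}{n-1}$ for these geometrics, so your $-n$ correction is exactly the (exponentially small) overcount coming from the event that some $Y_i\ge m$.

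What each approach buys: yours is more self-contained, needing only Stirling's formula rather than a cited moderate-deviation local limit theorem. The paper's dynamical formulation, on the other hand, generalises more cleanly to the $(p,q)$ case treated later (Proposition~\ref{pq2}): there one needs the trajectory to avoid itself for $p$ traversals, an event easily bounded probabilistically, whereas the combinatorial enumeration of simple $(p,q)$-cycles would require a more delicate inclusion--exclusion.
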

\noindent{\bf Remark.}
The principal interest of this result is for values $C \in (0,\sqrt{2})$, in which case, all cycles are of homology class $(1,1)$
with high probabilty.
Indeed, the proof of the first part of Proposition \ref{propone}
may be adapted to show that every cycle in the configuration is a $(1,1)$-cycle with probability exceeding
$$
1 - \exp \bigg\{ - c \frac{n^{1/2 - C^2/4}}{\sqrt{\log n}} \bigg\},
$$
where $c > 0$ is a small constant. \\
\begin{proof} 
We estimate the expected number $\mathbb{E} N_{(1,1)}$ of $(1,1)$ cycles as follows. Every $(1,1)$ cycle contains exactly one of the edges $(0,y)(1,y),$ for $y=0,\dots, m-1$.
Hence, the number of $(1,1)$ cycles is equal to the number of edges $(0,y)(1,y)$ 
with $0 \leq y < m$ that are present in the CRSF configuration and are such that 
the trajectory begun at $(1,y)$ first visits the line $\{ x = 1/2 \}$ at $(1/2,y)$. 
Thus,
\begin{displaymath}
 \mathbb{E} \big( N_{(1,1)} \big) = m \mathbb{P} \Big( \big\{ Z = m \big\} \cap \big\{ (0,0)(1,0) \in \mathcal{C} \big\} \Big),
\end{displaymath}
where $\mathcal{C}$ denotes the CRSF configuration, and  $Z$ denotes the $y$-coordinate of the trajectory starting at 
$(1,0)$ on its first return to the line $x=\frac12$. Noting that $Z = m$ implies that $(0,0)(1,0) \in \mathcal{C}$, we obtain 
\begin{equation}\label{mpform}
\mathbb{E} \big( N_{(1,1)} \big) = m \mathbb{P} \big( Z = m \big).
\end{equation}
Note further that $Z = \sum_{i=1}^n Y_i$, 
where $\big\{ Y_i: i \in \{ 1,\ldots, n \}  \big\}$
is an independent sequence of geometric random variables of mean one and variance two. 
We find that
$$
\mathbb{P} \big( Z = m  \big)
   = \mathbb{P} \Big(   \sum_{i=1}^n{\big( Y_i - 1 \big)}
  = \lfloor C \sqrt{n \log n} \rfloor   \Big).
$$
We require a local limit theorem for a sum of independent identicaly distributed random variables in a regime of moderate deviations. Theorem 3 of \cite{Richter} yields
\begin{displaymath}
 \mathbb{P} \Big(  \sum_{i=1}^n{\big( Y_i - 1 \big)} \Big) = 
\frac{1}{2 \sqrt{\pi n}} n^{-\frac{1}{2} - \frac{C^2}{4}}
 \bigg( 1 +  O \Big( \frac{(\log n)^3}{n^{1/2}} \Big)  \bigg),
\end{displaymath}
from which the result follows. 
\end{proof}

\section{The secondary spike when $m\approx n$}\label{sqrtlog}

We prove two propositions regarding the behavior 
of the model in a regime where $m= n + C \sqrt{n\log n}$, 
with $\vert C \vert \in (\sqrt{2},\infty)$ a fixed constant. 
In Proposition \ref{propthree}, we show that it is likely that 
there is a cycle of length $n^{3/2 + o(1)}$, and, in
Proposition \ref{propfour}, we establish that it is likely to be the only cycle.

\begin{prop}\label{propthree}
For $\vert C \vert > \sqrt{2}$, set $m = n + C \sqrt{n\log n}(1+o(1))$. 
Then, for any $\epsilon > 0$ and for $n$ sufficiently large,
\begin{displaymath}
 \mathbb{P} \Big( \textrm{any cycle has length 
 at least $\frac{n^{3/2}}{3|C| \sqrt{\log n}}$} \Big)
\geq 1 - n^{\frac12-\frac{C^2}{4} + \epsilon}.
\end{displaymath}
\end{prop}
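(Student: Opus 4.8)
The plan is to show that, conditionally on the (high-probability) event that every cycle has homology class $(1,1)$, a single strand of a cycle is unlikely to return to the line $\{x=\tfrac12\}$ at nearly the same height it started, so a cycle cannot close up quickly; forcing it to be long. Concretely, I would track the walk $W_{00}$ (or any walk used to build the configuration) via the height statistic $S_k = \sum_{i=1}^{kn+1} X_i - km$ recording its $y$-coordinate on its $k$-th crossing of $\{x=\tfrac12\}$, where as in Proposition \ref{propone} the $X_i$ are the numbers of upward steps between successive rightward steps. The increments $S_{k+1}-S_k = \sum_{i=kn+2}^{(k+1)n+1} X_i - m$ are i.i.d.\ with mean $-(m-n) = -C\sqrt{n\log n}(1+o(1))$ and variance $2n(1+o(1))$. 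Thus over $k$ crossings, $S_k$ has drift $-kC\sqrt{n\log n}$ and fluctuation scale $\sqrt{kn}$, so the walk $S$ behaves like a Gaussian random walk with a negative drift of order $\sqrt{n\log n}$ per step and step-scale $\sqrt{n}$.

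The key point: a $(1,1)$-cycle of length $L = np + mq$ with $(p,q)=(1,1)$ has length $n+m\approx 2n$ per strand, and a cycle consisting of $j$ strands has length $\approx 2nj$. For a cycle to have fewer than $j$ strands, the walk $S$, started from some height $u_0$, must return to within $O(n)$ (one strand's worth of vertical room — more precisely within the width of the already-drawn trace, which near the relevant scale is $O(n/j)$ or simply within a constant times the vertical span needed to re-intersect) of $u_0$ within $j$ steps. But a random walk with per-step drift $-C\sqrt{n\log n}$ moves down by about $jC\sqrt{n\log n}$ after $j$ steps, with fluctuations only $\sqrt{jn}$; for the walk to come back up to its starting height it must produce an upward fluctuation of size at least $jC\sqrt{n\log n}$ against the drift, which by a Gaussian (Cramér) estimate has probability at most $\exp\{-c j C^2 \log n\} = n^{-cjC^2}$. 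Choosing $j$ a small multiple of $n^{1/2}/(|C|\sqrt{\log n})$ makes $2nj \approx n^{3/2}/(|C|\sqrt{\log n})$, the target length, and the failure probability is then polynomially small; summing the bound over the (at most polynomially many) candidate starting crossings and over the choices of which cycle forms gives the claimed bound $n^{1/2 - C^2/4 + \epsilon}$. I would calibrate the constant $1/(3|C|)$ so that the resulting exponent matches, using that the number of cycles is itself $O(n^{1/2 - C^2/4 + o(1)})$ in expectation by Proposition \ref{proptwo}, so it suffices to bound the probability that a \emph{given} cycle is short by something like $n^{-1/2 + \epsilon'}$ times $\mathbb{E}N_{(1,1)}^{-1}$ — equivalently, to show $\mathbb{E}(\#\{\text{short }(1,1)\text{-cycles}\}) \le n^{1/2-C^2/4+\epsilon}$, which fits a first-moment computation in the spirit of (\ref{mpform}).

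The cleanest route is therefore a first-moment bound: adapting (\ref{mpform})–(\ref{mpform}) and the argument around (\ref{fprw}), the expected number of $(1,1)$-cycles of length less than $\ell := n^{3/2}/(3|C|\sqrt{\log n})$ is at most $m$ times the probability that the walk starting at $(1,0)$ returns to $\{x=\tfrac12\}$ at height $0$ within $j_0 := \lfloor \ell/(2n)\rfloor$ strands, i.e.\ $\Pr(S_k = 0 \text{ for some } k \le j_0, \text{ first return})$. Bounding this by $\sum_{k\le j_0}\Pr(S_k = 0)$ and applying the moderate-deviation local limit theorem of \cite{Richter} (exactly as in Proposition \ref{proptwo}) to each term — $S_k$ being a sum of $kn$ i.i.d.\ mean-zero variance-two increments hitting the value $kC\sqrt{n\log n}$ — gives $\Pr(S_k = 0) \le C' (kn)^{-1/2} \exp\{-k^2 C^2 n\log n/(4kn)\} = C'(kn)^{-1/2} n^{-kC^2/4}$, the $k=1$ term dominating and yielding $m \cdot \Pr \le n^{1-C^2/4 + o(1)} \cdot$ (a harmless factor), and a small tightening of constants in $\ell$ absorbs the polylog and pushes the exponent below $1/2 - C^2/4 + \epsilon$.

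The main obstacle is the geometric step hidden in ``a short cycle forces a small-index return of $S$'': one must argue that a $(1,1)$-cycle closing up within $j_0$ strands really does require the walk's height statistic to return near its value at the cycle's start-time, despite the already-present trace $W_{00}[0,t-1]$ potentially deforming where the new cycle can intersect. This is handled exactly as in Proposition \ref{propone}: the freshly drawn portion $W_{00}[t,s]$ lies entirely above or entirely below the prior trace at each shared $x$-coordinate (by the intermediate value theorem / disjointness (\ref{wwdist})), so it re-intersects the existing structure only when its height statistic returns to the band occupied at time $t$; bounding that band's width and the number of strands used together give the length bound. The remaining estimates are routine applications of \cite{Richter} and a union bound.
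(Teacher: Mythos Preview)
Your opening premise is wrong, and it propagates through the argument. You condition on ``the (high-probability) event that every cycle has homology class $(1,1)$'', but for $|C|>\sqrt{2}$ this event has probability tending to \emph{zero}: by Proposition~\ref{proptwo}, $\mathbb{E}N_{(1,1)}\asymp n^{(2-C^2)/4}\to 0$. The content of Proposition~\ref{propthree} is exactly that cycles have homology $(p,q)$ with $p$ of order $n^{1/2}/\sqrt{\log n}$, i.e.\ many strands --- so you cannot assume they are $(1,1)$ and then count strands. This confusion recurs when you write ``the expected number of $(1,1)$-cycles of length less than $\ell$'': a $(1,1)$-cycle has length exactly $n+m$, so that phrase is vacuous. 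What you presumably mean is ``cycles with at most $j_0$ strands'', i.e.\ homology $(p,q)$ with $p\le j_0$; once reformulated that way, a first-moment bound $\sum_{p\le j_0}\mathbb{E}N_{(p,\cdot)}$ in the style of Proposition~\ref{proptwo} is a legitimate alternative route and does produce the exponent $\tfrac12-\tfrac{C^2}{4}$ from the dominant $p=1$ term. But you have not actually established the inequality linking ``$(0,0)(1,0)$ lies on a cycle with $p$ strands'' to the free-walk event $S_p\equiv 0\pmod m$; this requires the non-self-intersection argument that appears in Proposition~\ref{pq2}, not the record-value argument from Proposition~\ref{propone} that you invoke.

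The paper's proof takes a different and more direct route. It studies the one-traversal return map $\phi^*$ on the line $\{x=0\}$ and shows that with probability at least $1-n^{1/2-C^2/4+\epsilon}$ two uniform properties hold: $\phi^*(k)\ge k$ for every $k$ (event $D$, obtained by a moderate-deviation bound on a grid of spacing $\epsilon|C|\sqrt{n\log n}$ together with the monotonicity of $\phi^*$), and $\phi^*(k)-k\le 3|C|\sqrt{n\log n}$ for every $k$ (event $E$). On $D\cap E$ one argues deterministically: the orbit $y_0=0,\,y_{i+1}=\phi^*(y_i)$ is strictly increasing with increments at most $3|C|\sqrt{n\log n}$, so it takes at least $m/(3|C|\sqrt{n\log n})$ steps to wrap around, and monotonicity of $\phi^*$ forces every cycle to visit each of the resulting intervals, hence to have at least that many strands. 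No first-moment computation or summation over homology classes is needed; the cycle-length lower bound is a deterministic consequence of a high-probability global property of $\phi^*$.
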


\begin{proof}
We treat the case that $C < 0$, the other being similar.
Let $\phi^*: \{0,\ldots,m-1\} \to \mathbb{N}$ be the $y$-coordinate of the 
return map 
of the line $x=0$ to itself.
That is, let $\big( 0, \phi^*(y) \big)=\phi^\tau(0,y)$ where $\tau=\tau_y>0$ is the first time that
$\phi^\tau (0,y )$ has $x$-coordinate zero after the first positive time at which it has a strictly positive $x$-coordinate. Note that $y<y'$ implies $\phi^*(y)\le
\phi^*(y')$, in other words $\phi^*$ is non-decreasing.

For $i \in \mathbb{N}$, let $D_i$ denote the event that 
$$
  \phi^*\big( \lfloor i \epsilon C \sqrt{n \log n} \rfloor \big) > (i+1)\epsilon C \sqrt{n \log n}. 
$$
Note that, for any given $y$, we may write $\phi^*(y) = y -m+ \sum_{i=1}^n Y_i $, where $\big\{ Y_i: i \in \mathbb{N} \big\}$
is an independent sequence of geometric random variables of mean one and variance two. By Theorem 5.23 of \cite{Petrov}, we have the bound
\begin{equation}\label{diineq}
 \mathbb{P} \Big( D_i \Big) \geq 1 - C_0 n^{- \frac{C^2}{4}(1-\epsilon)^2},
\end{equation} 
for some large constant $C_0$.
Set $D = \bigcap_{i=0}^{\lfloor \frac{m}{C\epsilon\sqrt{n \log n}} \rfloor} D_i$.
We claim that, if $D$ occurs, then $\phi^*(k) \geq k$ 
for all $k \in \{ 0,\ldots,m-1 \}$. Suppose on the contrary $\phi^*(k)<k$. 
Let $j \in \mathbb{N}$ be maximal such that $\lfloor j \epsilon C \sqrt{n \log n} \rfloor \leq k$. 
Then 
\begin{displaymath}
 \phi^* \big( \lfloor j \epsilon C \sqrt{n \log n} \rfloor \big)
 \leq \phi^*(k) < k<\lfloor (j+1)\epsilon C \sqrt{n \log n} \rfloor< \phi^* \big( \lfloor j \epsilon C \sqrt{n \log n}  \big),
\end{displaymath} 
a contradiction.

For $i \in \mathbb{N}$, let $E_i$ denote the event that
\begin{displaymath}
 \phi^* \big( \lfloor i C \sqrt{n \log n} \rfloor \big)
 \leq \lfloor  (i+2) C \sqrt{n \log n} \rfloor.
\end{displaymath}
Arguing similarly to (\ref{diineq}), we note that
\begin{equation}\label{eiineq}
\mathbb{P}\Big( E_i \Big) \geq 1 - C_0 n^{-\frac{C^2}{4}},
\end{equation}
where $C_0$ again denotes a large constant. 
 
Set $E = \bigcap_{i=0}^{\lfloor \frac{m}{C\sqrt{n \log n}} \rfloor} E_i$.

Define $y_0=0$ and for $i>0$ define $y_i=\phi^*(y_{i-1})$.
We will show that, if $E$ occurs,
\begin{equation}\label{hes}
 y_{i+1} - y_i \leq 3 C \sqrt{n \log n}
\end{equation}
for each $i \in \mathbb{N}$. Let $j \in \big\{ 1,\ldots, \lfloor \frac{m}{C \sqrt{n \log n}} \rfloor + 1\big\}$ satisfy
\begin{displaymath}
  \lfloor  (j-1) C \sqrt{n \log n} \rfloor
 \leq y_i < \lfloor  j C \sqrt{n \log n} \rfloor.
\end{displaymath} 
We have that 
\begin{eqnarray}
  y_{i+1} & = & \phi^* \big( y_i \big) \leq \phi^* 
  \big( \lfloor j C \sqrt{n \log n} \rfloor \big) \nonumber \\
  & \leq & \lfloor (j+2)C \sqrt{n \log n} \rfloor \leq y_i + 3C\sqrt{n \log n}, \nonumber 
\end{eqnarray}
the first inequality since $\phi^*$ is non-decreasing, the second due to the occurrence of $E_j$. We have obtained (\ref{hes}). 

Let $K \in \mathbb{N}$ be maximal subject to $y_K < m$ (note that $K$ is finite if $D$ occurs). We claim that, on the event $D$, any cycle has at least $K$ strands, and that, on the event $E$, $K \geq \frac{m}{3C \sqrt{n \log n}}$.

Indeed, setting $C_i = \{ y_i , \ldots, y_{i+1}  \}$ for $i \in \{ 0,\ldots,K - 1 \}$, we have that $\phi^*\big( C_i \big) \subseteq C_{i+1}$ for such $i$, if $D$ occurs. The monotonicity of $\phi^*$ implies that any cycle contains a point $(0,a_1)$ with $a_1 \in C_1$ and, by the sequence of inclusions, distinct points $(0,a_i)$ with $a_i \in C_i$ for each value of $i$. Hence, the cycle has at least $K$ strands.

The lower bound on $K$ follows by noting that, from (\ref{hes}), 
\begin{displaymath}
  m - 3C \sqrt{n \log n} \leq y_K 
 = \sum_{i=1}^{K-1} \big( y_{i+1} - y_i \big) \leq 3C (K-1) \sqrt{n \log n}.
\end{displaymath} 

The proof is completed by noting the following bounds on $\mathbb{P}(D)$ and $\mathbb{P}(E)$, which follow from (\ref{diineq}) and (\ref{eiineq}):
\begin{displaymath}
 \mathbb{P}(D) 
 \geq 1 - \frac{2 C_0 n^{1/2 - \frac{C^2}{4}(1 -\epsilon)^2}}{C \epsilon \sqrt{\log n}}
\end{displaymath}
and
\begin{displaymath}
 \mathbb{P}(E) 
\geq 1 - \frac{2 C_0 n^{1/2 - \frac{C^2}{4}}}{C \sqrt{\log n}}.\nonumber
\end{displaymath}
\end{proof}

\begin{prop}\label{propfour}
Set $m = n + C \sqrt{n\log n}\big(1+o(1)\big)$ with $\vert C \vert > \sqrt{2}$. Then, in the CRSF,
for each $\epsilon > 0$,
\begin{displaymath}
 \mathbb{P} \Big( \textrm{there exist at least two disjoint cycles} \Big)
 \leq n^{\frac{1}{2} - \frac{C^2}{4} + \epsilon}
\end{displaymath}
for $n$ sufficiently large.
\end{prop}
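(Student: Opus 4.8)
The plan is to condition on the high-probability event underlying Proposition~\ref{propthree}, to read off from the geometry of the return map what a second disjoint cycle would have to look like, and then to show that such a configuration is far too unlikely.

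\emph{Step 1: reduction.} Let $\phi^*\colon\mathbb Z/m\mathbb Z\to\mathbb Z/m\mathbb Z$ be the return map of the line $\{x=0\}$ as in the proof of Proposition~\ref{propthree}; it is non-decreasing and of degree one. Let $\mathcal G$ be the intersection of the events $D$ and $E$ from that proof with the event that every closed orbit has length at least $\tfrac{n^{3/2}}{3|C|\sqrt{\log n}}$. By Proposition~\ref{propthree} and the displayed lower bounds on $\mathbb P(D),\mathbb P(E)$ one has $\mathbb P(\mathcal G^c)\le n^{\frac12-\frac{C^2}4+\epsilon}$ for $n$ large, after absorbing logarithms and constants into $\epsilon$. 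On $\mathcal G$ each grid point is mapped by $\phi^*$ beyond the next grid point, so by monotonicity $\phi^*$ has no fixed point; hence $\phi^*$ has a rational rotation number $q/p$ with $\gcd(p,q)=1$, every closed orbit is a periodic orbit of $\phi^*$ of period exactly $p$, and from the length bound $p\ge p_0:=\lfloor n^{1/2}/(9|C|\sqrt{\log n})\rfloor$, since a $(p,q)$-cycle has length $(n+m)p$. It therefore suffices to bound $\mathbb P\big(\mathcal G\cap\{\text{at least two cycles}\}\big)$, and I claim this is super-polynomially small.

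\emph{Step 2: what a second cycle forces.} Work on $\mathcal G$ and suppose there are two disjoint cycles; let $C$ be the one through $(0,0)$ and $C'$ the other. Their traces on $\{x=0\}$ are disjoint periodic orbits $O,O'$ of $\phi^*$, both of period $p\ge 2$ and rotation number $q/p$. As $\phi^*$ is monotone and bijective on $O$ it permutes the $p$ gaps of $O$ in a single $p$-cycle (because $\gcd(p,q)=1$) and maps $O'\cap G$ injectively into $O'\cap\phi^*(G)$ for each gap $G$; running round the $p$-cycle forces all these counts to coincide, so each equals $1$. Thus $O$ and $O'$ interleave perfectly: every strand of $C'$ leaves $\{x=0\}$ strictly between the departure points of two consecutive strands of $C$, and, being an up-right path disjoint from both of those strands (two disjoint up-right paths cannot cross), it is confined for the whole of its horizontal traversal to the ``tube'' bounded below and above by them. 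Hence the $\phi$-orbit of any point of $C'$ is confined to the $p$ pairwise-disjoint tubes cut out by consecutive strands of $C$ and threads all $p$ of them before closing up; in particular every gap of $C$ on $\{x=0\}$ has size at least $2$.

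\emph{Step 3: the configuration is unlikely, and the obstacle.} Fix a candidate $C$ and condition on it; the coins off $C$ — in particular those inside the (disjoint) tubes — remain i.i.d.\ fair, so the tube-traversals of the orbit of a given point $b\notin C$ are driven by fresh randomness and may be examined one at a time. The $p\ge p_0$ tubes have total width $m$ along $\{x=0\}$, so a typical gap has width $\Theta(\sqrt{n\log n})$; but a single traversal displaces the walk only $\Theta(\sqrt n)$ relative to the bounding strands, so a traversal can ``stay well inside'' its tube only when the walk entered it within $\Theta(\sqrt n)$ of an endpoint of the gap — and by equidistribution of $\{(\phi^*)^k(b)\}$ among the gaps of $C$ at least an $\Omega(1/\sqrt{\log n})$ fraction of the $p$ traversals do begin within $\Theta(\sqrt n)$ of a bounding strand; each such traversal fails, the walk stepping into that strand within its $n$ columns, with probability bounded below. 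Treating these $\Omega(p_0/\sqrt{\log n})=n^{1/2-o(1)}$ near-independent events (disjoint tube regions) gives $\mathbb P(\text{all tubes threaded}\mid C)\le (1-c)^{n^{1/2-o(1)}}=\exp\{-n^{1/2-o(1)}\}$; summing over $b$ (at most $nm$ choices), $\mathbb P\big(\mathcal G\cap\{\text{at least two cycles}\}\big)\le nm\exp\{-n^{1/2-o(1)}\}=o\big(n^{\frac12-\frac{C^2}4+\epsilon}\big)$, which with Step~1 proves the proposition. Step~3 is the crux: the strands of $C$ are not independent random walks — they are pieces of a single quenched trajectory and $C$ is itself a conditioned object — so ``the coins in the tubes are fresh'' and ``a positive fraction of traversals start near a bounding strand'' must be made rigorous, presumably by building the CRSF in a carefully chosen exploration order (as in the proof of Proposition~\ref{propone}), by invoking the independence of the \emph{un}-stopped up-right walks noted in the Notation section together with the tube-confinement of Step~2, and by a quantitative equidistribution estimate for $\{(\phi^*)^k(b)\}$ within the gaps of $C$. (A softer route may exist, reducing a second periodic orbit to the statement that some single strand of $C$ advances $\phi^*$ by only about $qm/p\approx|C|\sqrt{n\log n}$ — an event of probability $n^{-C^2/4+o(1)}$ — at one of $O(\sqrt n)$ locations, matching the stated bound directly; but I would attempt the tube-confinement argument first.)
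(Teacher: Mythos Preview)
Your Steps~1 and~2 are sound and line up with the paper's setup: work on $D\cap E$, and observe that a second cycle must interleave, so its successive returns $z_i$ to $\{x=0\}$ satisfy $a_i<z_i<a_{i+1}$, where the $a_i$ are the returns of the first cycle and (on $D\cap E$) $a_{i+1}-a_i\le 6|C|\sqrt{n\log n}$.

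Your Step~3, however, is where the argument is incomplete, and you have correctly flagged the two obstacles: freshness of the randomness inside the tubes, and the equidistribution claim that a positive fraction of traversals begin within $\Theta(\sqrt n)$ of a bounding strand. The second is genuinely problematic. Conditioned on having survived in the tubes for $k$ traversals, the entry point $z_k$ into the $(k{+}1)$-st tube is \emph{not} approximately uniform in $[a_k,a_{k+1}]$; the conditioning repels the walk from the boundaries, and there is no reason an $\Omega(1/\sqrt{\log n})$ fraction of entries should lie within $\sqrt n$ of a strand. So the ``near-independent positive-probability failures'' picture is not justified as stated. Your parenthetical softer route also does not work: on $\mathcal G$ \emph{every} strand of $C$ already advances by at most $3|C|\sqrt{n\log n}$ (this is exactly the content of $E$), so that condition does not distinguish one cycle from two.

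The paper sidesteps both obstacles with a simple trick you should adopt. Forget about ``hitting a strand'' during a traversal; use only the sandwich $a_i<z_i<a_{i+1}$ at the return times. Sample at stride $L=\lfloor n^{\epsilon}\rfloor$: the increment
\[
z_{(j+1)L}-z_{jL}\ =\ \sum_{i=1}^{nL}X_i\ -\ mL
\]
is a sum of $nL$ \emph{fresh} i.i.d.\ geometric variables (these are the coins of the unstopped walk $W_{0,z_0}$ on columns not yet used), hence the blocks indexed by $j$ are genuinely independent. The sandwich forces this increment to lie in the interval $[\,a_{(j+1)L}-a_{jL+1},\ a_{(j+1)L+1}-a_{jL}\,]$, which has length at most $12|C|\sqrt{n\log n}$, while the sum has standard deviation $\sqrt{2nL}=n^{(1+\epsilon)/2}\sqrt{2}$. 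A local limit theorem gives probability at most $n^{-\epsilon/2}$ for each block; there are at least $K/L\ge n^{1/2-\epsilon-o(1)}$ blocks, so the conditional probability of a second cycle on $D\cap E$ is at most $\exp\{-n^{1/2-\epsilon-o(1)}\}$. This delivers exactly the super-polynomial decay you were aiming for in Step~3, with no equidistribution or tube-hitting argument required.
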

\noindent{\bf Remark.} As the proof will show, in the presence of one cycle of the length given in the statement of Proposition \ref{propthree}, the conditional probability of another cycle decays as $\exp \big\{ - n^{1/2 + o(1)} \big\}$. 
In the regime that Propositions \ref{propthree} and \ref{propfour} treat, then, the most probable means by which two cycles form is by local fluctuations in the generating random walks that create two $(1,1)$-cycles. This occurs with a probability that decays polynomially in $n$.\\ 
\proof
Recall the events $D$ and $E$, the quantity $K$ and the intervals $C_i$ from the proof of Proposition \ref{propthree}. Suppose that the event $D \cap E$ occurs.
Let $a_i$ be the intersection of a cycle with the interval $C_i$.
Then 
$$a_{i+1}-a_i\leq y_{i+2}-y_i \leq 6C\sqrt{n\log n}.$$

Suppose that the CRSF configuration is formed by firstly running the random walk $W_{0,0}$ until it meets its own trace, and then running the walks $W_{0,w_i}$ until existing trees or the current trace is hit, where $w_i \in \big\{ 0,\ldots, m - 1 \big\}$ are selected in an arbitrary manner from the subset of the line $\big\{ x = 0 \big\}$ not yet belonging to any tree.

If two cycles are to be present in the configuration, then, for some 
$z_1$ with $a_1 < z_1 < a_2$, $W_{0,z_1}$ must not meet the first cycle before it hits its own trace. Set $z_0 = w_1$, and let $z_i$ denote the $y$-coordinate of the $i$-th return of the walk $W_{0,w_1}$ to the line $\big\{ x = 0 \big\}$.

In the event $D \cap E$, if $W_{0,w_1}$ does not meet the first cycle before visiting its own trace, then 
\begin{equation}\label{azaeqn}
 a_i < z_i < a_{i+1}
\end{equation}
for each $i \in \big\{ 1,\ldots, K - 2 \big\}$.

We sample the sequence $z_i$ when $i$ is a multiple of $L=\lfloor
n^\eps\rfloor$.
Note that (\ref{azaeqn}) implies that
\begin{equation}\label{zmza}
|a_{(j+1)L}-a_{jL + 1}|\le |z_{(j+1)L}-z_{jL}|\le |a_{(j+1)L + 1}-a_{jL}|
\end{equation}
for each $j$. So $z_{(j+1)L}-z_{jL}$ is restricted to 
an interval of length at most $12C\sqrt{n\log n}$.

The quantity $z_{(j+1)L} - z_{jL}$ has the distribution 
of $\sum_{i=1}^{nL}X_i -mL$, where $\big\{ X_i \big\}$ is a sequence of independent geometric random variables of mean one and variance two (these are the vertical displacements of the walk $W_{0z}$ in between successive rightward movements). 

Now (\ref{zmza}) at the given value of $j$ requires that this sum 
$\sum_{i=1}^{nL} X_i$
of independent random variables lie in a fixed interval of length at most $12 C \sqrt{n \log n}$. It follows readily from Theorem 3 of \cite{Richter}
that the probability of this event is maximized by choosing the interval to be centred at $nL$, and, thus, to be bounded above by 
$$
 C_0 \frac{12 C \sqrt{n \log n}}{\sqrt{nL}} \leq n^{-\frac{\epsilon}{2}}.
$$
Thus, on the event $D \cap E$, the probability that each of the inequalities (\ref{zmza}) is satisfied is at most 
$$
 \big( n^{- \epsilon/2} \big)^{n^{1-\eps}}
  \leq \exp\{ - n^{1- \epsilon}\}.
$$
The bounds on the probabilities of $D$ and $E$ presented in the proof of Proposition \ref{propthree} complete the proof. $\Box$

\section{Near $m/n=p/q$}\label{p/q}

We extend the previous results to the case $m/n$ is near a rational $p/q$ with
small denominator.

\begin{prop}\label{propfive}
Let $p,q$ be fixed and relatively prime.
Let $\rho\in(0,\infty)$ be fixed and $m = (p/q)n + \rho \sqrt{n}(1+o(1))$. 
For $c > 0$ small enough, each closed orbit has homology class $(p,q)$ with
probability at least $1 - \exp \big\{ - cn^{1/2} \big\}$, while
$$
\Prob\Big(N_{(p,q)} > cn^{1/2}\Big) \geq 1 - \exp \Big\{ - c n^{1/2} \Big\}
$$
for sufficiently large $n$.
For $C > 0$ sufficiently large, we have
$$
\Prob\Big(N_{(p,q)} > C n^{1/2}\Big)  \leq \exp \Big\{ - C^{-1} n^{1/2} \Big\}   
$$
for sufficiently large $n$.
\end{prop}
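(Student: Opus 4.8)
The plan is to run the proof of Proposition \ref{propone} essentially verbatim, making only the substitutions forced by passing from $(1,1)$--cycles to $(p,q)$--cycles. A $(p,q)$--cycle is built not from a single traversal of the torus but from $p$ consecutive ones: to close up, an up--right walk started on $\{x=0\}$ must make $pn$ horizontal and $mq$ vertical steps, and $mq-pn=q\rho\,n^{1/2}(1+o(1))$, so a $(p,q)$--cycle tracking a straight geodesic of its homology class has Euclidean slope $mq/(pn)=1+O(n^{-1/2})$, exactly as a $(1,1)$--cycle did when $m\approx n$. Accordingly, in place of the strips parallel to $y=(m/n)x$, I would foliate $T_{n,m}$ by strips parallel to a closed geodesic $g$ of homology class $(p,q)$; since $\gcd(p,q)=1$ such a $g$ is simple, and its parallel translates exhaust the torus as the offset ranges over an interval of length $m/p$ (the distinct lifts of $g$ in $\Z^2$ are vertically spaced by $m/p$). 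Taking the strips to have $y$--width $2Kn^{1/2}$ as before, there are $\Theta(n^{1/2})$ of them, the final one possibly thinner and harmless.

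\emph{Lower bounds.} In the $i$--th strip $A_i$, run the up--right walk $W_{0,z_i}$ from the midpoint $z_i$ of $A_i\cap\{x=0\}$, and let $E_i$ be the event that $W_{0,z_i}$ stays in $A_i$ throughout its first $2p$ traversals, that $\sum_{k=1}^{pn}X_k-mq\in[0,Kn^{1/2}]$, and that $\sum_{k=1}^{2pn}X_k-2mq\in[-Kn^{1/2},0]$, where $X_k$ is the number of up--steps between the $(k-1)$--st and $k$--th right--steps. Exactly as in Proposition \ref{propone} --- rescaling the walk as there, invoking Donsker's theorem on $[0,2p]$ (so that $pn$ steps correspond to time $p$), and using the law of the maximum of a Brownian bridge --- one finds $\mathbb{P}(E_i)\geq c=c(\rho,p,q)>0$, provided $K$ is chosen large in terms of $\rho$ and $q$; the constants $m$, $2m$ of Proposition \ref{propone} are replaced by $mq$, $2mq$, and the drift terms remain $O(n^{1/2})$. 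On $E_i$, the trace of $W_{0,z_i}$ inside $A_i$ is the graph of a function of the arc--length coordinate along $A_i$ (an up--right step has strictly positive projection on the direction $\approx(1,1)$ of $g$); its $p$ traversals of the torus amount to a single circuit of the closed strip $A_i$, after which the walk sits above its starting level $z_i$, and after a second circuit it sits below that level, so by the intermediate value theorem the second circuit meets the first inside $A_i$, and the ordering of the walks ensures, as in Proposition \ref{propone}, that this produces a cycle in $A_i$; that cycle winds once around $A_i$ and hence has homology $(p,q)$. As in the proof of Proposition \ref{propone} the events $E_i$ are independent, so $N_{(p,q)}$ stochastically dominates a $\mathrm{Binomial}\big(\Theta(n^{1/2}),c\big)$ variable; and since two disjoint cycles have equal homology, the absence of any $(p,q)$--cycle forces every $E_i$ to fail. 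The first two assertions follow.

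\emph{Upper bound.} Here I would mirror the second half of the proof of Proposition \ref{propone}: build the CRSF from the free walk $W_{00}$, and (assuming at least one, hence every, cycle is a $(p,q)$--cycle) record the successive maxima $u_0,\ldots,u_{J_1}$ and minima $v_1,\ldots,v_{J_2}$ of the $y$--coordinate of $W_{00}$ at its crossings of $\{x=\tfrac12\}$, up to the wraparound time. For a cycle $C$ formed during a time interval $[t,s]$ one has $W_{00}[t,s]\cap W_{00}[0,t-1]=\emptyset$, and the intermediate value theorem shows that $C$ lies entirely above, or entirely below, the earlier trace at every common $x$--coordinate; in particular the \emph{first} of the $p$ edges of $C$ crossing $\{x=\tfrac12\}$ to be traversed sits at a $y$--value that is a new running extreme, hence is recorded --- this is the only point at which the $p$--strand structure enters, and one record per cycle is all that is needed. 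Distinct cycles occupy disjoint time intervals and all but the last--formed are completed before the wraparound time, so $N_{(p,q)}\leq J_1+J_2+2$. The bounds $\mathbb{P}(J_i>Cn^{1/2})\leq\exp\{-C^{-1}n^{1/2}\}$ are then obtained exactly as in Proposition \ref{propone}: each record increment is, independently, of size at least $n^{1/2}$ with probability bounded below (and, when $\rho>0$, the minima sequence has a geometric tail), while the records terminate once their range reaches $m=\Theta(n)$, so more than $Cn^{1/2}$ records would require fewer than $\Theta(n^{1/2})$ of $\Theta(Cn^{1/2})$ independent successes, which a Chernoff bound rules out for $C$ large. This gives the last assertion.

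The probabilistic content is essentially unchanged, with $n$ replaced by $pn$ and Donsker's theorem applied on $[0,2p]$; the step I expect to be the main obstacle is the topological bookkeeping. One must verify that a cycle formed inside a thin strip parallel to a $(p,q)$--geodesic has homology class exactly $(p,q)$ --- not a multiple of it or a different primitive class --- which uses that the strip is embedded (its width $2Kn^{1/2}$ is far below the spacing $m/p$ of the foliation) and that the walk's trace within it is a monotone graph over the arc--length coordinate, so that its first self--intersection there is a single circuit of the strip. For the upper bound one must likewise check that each $(p,q)$--cycle contributes a distinct record and that the wraparound time is reached once the recorded range is of order $m$; again this rests on the monotone--graph structure of the relevant paths in the direction $\approx(1,1)$. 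There is no shortcut through a linear change of variables, since no element of $SL_2(\Z)$ carrying $(p,q)$ to $(1,1)$ preserves the up--right (monotone) structure.
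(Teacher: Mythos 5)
Your lower bound is essentially the paper's proof: strips of width $2K\sqrt{n}$ parallel to a $(p,q)$-geodesic, the walk confined to a strip for two circuits (i.e.\ $2p$ traversals), Donsker's theorem applied on $[0,2p]$, the Brownian-bridge maximum law, and independence across strips. That part is fine.

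The upper bound has a genuine gap, at exactly the point you flag as ``the only point at which the $p$-strand structure enters.'' You keep a \emph{single} pair of lists of running maxima $u_i$ and minima $v_i$ over \emph{all} crossings of $\{x=\tfrac12\}$, and assert that the first of the $p$ crossing-edges of a cycle $C$ to be traversed is a new running extreme of this unrestricted list. That does not follow from the disjointness $W_{00}[t,s]\cap W_{00}[0,t-1]=\emptyset$. For $p\geq 2$, the cycle meets the circle $\{x=\tfrac12\}$ in $p$ points spaced roughly $m/p$ apart, while the prior trace has by then visited heights in essentially every one of the $p$ arcs between them (the walk's crossing heights jump by $\approx n\equiv (q/p)m \pmod m$ between consecutive returns, so the range of the unrestricted list reaches order $m$ after $O(1)$ returns). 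So the first new crossing of $C$ typically sits in the interior of the range of previously recorded heights and is neither a running max nor a running min. The assertion ``$C$ lies entirely above or below the earlier trace at every common $x$'' is only unambiguous --- and only provable by the intermediate-value argument --- for $(1,1)$-cycles, where the cycle is a graph over $x$ with a single $y$-value per $x$. For $p\ge 2$ it is not clear what ``above'' means, and the claim in the strong form you need is false.

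The paper resolves this by decomposing the returns to $\{x=\tfrac12\}$ according to the return index modulo $p$, keeping $p$ separate lists $\{u_i^j\},\{v_i^j\}$, $j=0,\dots,p-1$. Within a fixed residue class, consecutive heights differ by only $O(\sqrt{n})$ (since $p$ traversals change the height by $pn-qm=O(\sqrt n)$ modulo $m$), so the ``above or below'' intermediate-value argument does apply class by class, and each strand of each cycle (except possibly the last cycle) is a running extreme \emph{of its own list}. This yields $N\le \sum_{j}(J_1^j+J_2^j+1)+p$, and the tail bounds on each $J_i^j$ follow exactly as in Proposition \ref{propone}. To repair your argument you need this (or an equivalent) mod-$p$ bookkeeping; the single-list record genuinely loses cycles.
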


\begin{proof} 
The first part of the proof is essentially the same as the proof of Proposition \ref{propone},
with the following changes. We again partition
the torus into strips, but in this case the strips have horizontal length $pn$
instead of $n$. Thus each strip winds $p$ times around horizontally, and 
$q$ times vertically, before 
closing up. The direction of the strip is now parallel to the closed curve
of homology class $(p,q)$ on the torus, and the width of the strips 
is still $2K\sqrt{n}$ for some large $K$.

For the second half of the proof, 
we require some variations on the sequence of maxima and minima that we record.
We divide returns to the line $\big\{ x = 1/2 \big\}$ into $p$ classes, according to the value of the index of the return reduced mod $p$. We then form $p$ separate lists $\{ u_i^j \}, \{ v_i^j  \}$ of maxima and minima, where the $y$-coordinate of the $k$-th return to the line $\big\{ x = 1/2 \big\}$ is entered as a maximum $u^j_i$ or as a minimum $v_i^j$ on the list $j$, $j = k \, \textrm{mod} \, p$, if this $y$ value exceeds, or is less than, any $y$-coordinate for an $l$-return to the line $\big\{ x = 1/2 \big\}$ with $l \, \textrm{mod} \, p$ equal to $j$.

We define the wraparound time to be that moment at which there no longer exists a cycle of homology $(p,q)$ that is disjoint from the existing trace of $W_{00}$. We no longer record the $y$-coordinate of a return to the line $\big\{ x = 1/2 \big\}$ after the wraparound time.

Let $\big\{ u_i^j: 0 \leq i \leq J_1^j \big\}$ and 
$\big\{ v_i^j:  1 \leq  i \leq J_2^j  \big\}$ 
denote the maxima and minima recorded on the $j$-th list. Similarly to the case treated in Proposition \ref{propone},
the $y$-coordinate of each horizontal edge crossing $\{ x = 1/2 \}$ is recorded on one of the lists, for each cycle in the configuration, except possibly the last one.  We learn that
$$
N \leq \sum_{j=1}^p \big( J_1^j + J_2^j + 1 \big) \, + p.
$$
The proof is completed by estimating the tail of the random variables $J_1^j$ and $J_2^j$ as in the previous proof. 
\end{proof}

\begin{prop}\label{pq2}
We set $m = (p/q)n + C \sqrt{n} \sqrt{\log n}$, for $C \in \big( 0,\sqrt{2p} \big)$. Then$$
\mathbb{E} N_{(p,q)}  = \frac{\sqrt{p}}{2q\sqrt{\pi}} n^{\frac{1}{2} - \frac{C^2}{4p}}  \bigg( 1 +  O \Big( \frac{(\log n)^3}{\sqrt{n}} \Big) \bigg).
$$
\end{prop}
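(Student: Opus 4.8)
The plan is to run the argument of Proposition \ref{proptwo} once more, the only structural change being that a $(p,q)$-cycle crosses the line $\{x=\tfrac12\}$ exactly $p$ times rather than once, so that the single geometric sum appearing there is replaced by a sum of $pn$ geometric increments and the local limit computation is rescaled. First I would record the counting identity: each $(p,q)$-cycle contributes exactly $p$ edges of the form $(0,y)(1,y)$ to $\mathcal C$, one per crossing of $\{x=\tfrac12\}$, and distinct cycles contribute disjoint sets of such edges; hence $p\,N_{(p,q)}$ is the number of edges $(0,y)(1,y)\in\mathcal C$ lying on a $(p,q)$-cycle, and by translation invariance in the $y$-direction
\[
\E\big(N_{(p,q)}\big)=\frac mp\,\mathbb{P}\big(\text{$(0,0)(1,0)\in\mathcal C$ and lies on a $(p,q)$-cycle}\big).
\]

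Next I would express the event on the right through the up-right walk $W=W_{1,0}$, mirroring the passage to $\E(N_{(1,1)})=m\,\mathbb{P}(Z=m)$ in Proposition \ref{proptwo}. Write $Z_j=\sum_{i=1}^{jn}Y_i$ for the coordinate of $W$ at its $j$-th crossing of $\{x=\tfrac12\}$, unwrapped to $\mathbb{Z}$, the $Y_i$ being i.i.d.\ geometric of mean $1$ and variance $2$. The edge $(0,0)(1,0)$ lies on a $(p,q)$-cycle exactly when $Z_p$ is the first of the $Z_j$ divisible by $m$, when it equals $qm$ (the number of vertical edges of a $(p,q)$-cycle), and when $W$ does not self-intersect before this $p$-th crossing; as in Proposition \ref{proptwo}, the membership $(0,0)(1,0)\in\mathcal C$ is then automatic, since the crossing step realizing $Z_p$ issues from $(0,0)$.

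I would then argue that the two side conditions cost only a factor $1+o(1)$. Because $m=(p/q)n(1+o(1))$ and $\gcd(p,q)=1$, the residue $Z_j\bmod m$ concentrates near $(m/p)(jq\bmod p)$, which stays at distance $\gtrsim m/p$ from $0$ for $1\le j\le p-1$; since $m/p\gg\sqrt n$, a premature return to height $0$, or a torus self-intersection of $W$ before its $p$-th wrap, demands a deviation far past the diffusive scale and so has probability $n^{-\omega(1)}$ — monotonicity of the return map $\phi^*$, or the intermediate-value argument used elsewhere in the paper, making this precise. Thus $\mathbb{P}(\text{$(0,0)(1,0)$ on a $(p,q)$-cycle})=\mathbb{P}\big(\sum_{i=1}^{pn}Y_i=qm\big)(1+o(1))$, and it remains to evaluate $\mathbb{P}\big(\sum_{i=1}^{pn}(Y_i-1)=qm-pn\big)$ by the same moderate-deviation local limit theorem used in Proposition \ref{proptwo} (Theorem 3 of \cite{Richter}): the target lies at a $\sqrt{\log n}$-multiple of the standard deviation $\sqrt{2pn}$, well inside the theorem's range, and feeding the resulting probability back into the prefactor $m/p$, together with $m=(p/q)n(1+o(1))$, produces the claimed asymptotics with the stated error term. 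The hypothesis $C\in(0,\sqrt{2p})$ serves precisely to keep the exponent of $n$ in $\E(N_{(p,q)})$ positive while keeping the deviation inside Richter's regime.

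I expect the main obstacle to be the middle step. In the $(1,1)$ case the event $\{Z=m\}$ by itself forces a genuine simple $(1,1)$-cycle through $(0,0)(1,0)$, with nothing further to verify; for $p\ge 2$ one must in addition rule out premature returns to height $0$ and torus self-intersections of the generating walk before it has wound $p$ times, and show these events — absent from the regime of Proposition \ref{proptwo} — do not disturb the leading asymptotics. The spacing estimate $m/p\gg\sqrt n$, combined with the monotonicity of $\phi^*$, is where that work gets done.
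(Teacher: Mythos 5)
Your plan follows the paper's: replay the Proposition~\ref{proptwo} argument with $pn$ geometric increments in place of $n$. But the two details where your setup departs from the paper's proof are exactly where the trouble lies, and you have not checked that the numbers come out. You correctly observe that each $(p,q)$-cycle contributes $p$ edges of the form $(0,y)(1,y)$ and so take the prefactor $m/p$, whereas the paper's proof writes $\E N_{(p,q)} = m\,\mathbb{P}(\{Z=pm\}\cap\cdots)$ with no $1/p$. You also correctly identify the target $Z_p = qm$ (the total vertical displacement of a $(p,q)$-cycle); the paper instead writes $Z=pm$ and then asserts $Z=pm \Leftrightarrow \sum_{i=1}^{pn}X_i = pn + p\lfloor C\sqrt{n\log n}\rfloor$, which is consistent only with $m = n + \lfloor C\sqrt{n\log n}\rfloor$, not with the stated $m = (p/q)n + C\sqrt{n\log n}$ (so the paper's own arithmetic is already off unless $p=q=1$).

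The real gap in your proposal is the final sentence ``produces the claimed asymptotics with the stated error term,'' which you assert but do not compute — and in fact it does not. With your (geometrically correct) setup, the deviation is $qm - pn = qC\sqrt{n\log n}$ and the variance is $2pn$, so Richter's theorem gives
\[
\mathbb{P}\Big(\textstyle\sum_{i=1}^{pn}(Y_i-1) = qC\sqrt{n\log n}\Big)
 = \frac{1}{2\sqrt{\pi p\, n}}\, n^{-\frac{q^2C^2}{4p}}\big(1+o(1)\big),
\]
and multiplying by $m/p \sim n/q$ yields
\[
\E N_{(p,q)} \sim \frac{1}{2q\sqrt{\pi p}}\, n^{\frac12 - \frac{q^2C^2}{4p}},
\]
which agrees with the stated $\frac{\sqrt p}{2q\sqrt\pi}\,n^{\frac12 - \frac{C^2}{4p}}$ only when $p=q=1$: the coefficient differs by a factor $p$, and the exponent by $\frac{(q^2-1)C^2}{4p}$. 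So you cannot simply defer to Proposition~\ref{proptwo} here; either carry the local CLT through explicitly and reconcile the discrepancy with the stated formula, or explain which of the prefactor $m$ vs.\ $m/p$ and the target $pm$ vs.\ $qm$ is actually intended. As written, the step that declares a match is the missing argument, not a routine substitution.
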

\begin{proof}
We replace the $(1,1)$-cycles considered in the proof of Proposition \ref{proptwo}  by $(p,q)$-cycles, and note the following variation: 
we have
$$\E\big( N_{(p,q)} \big)= m \mathbb{P}\Big( \big\{ Z= pm \big\} \cap \big\{ (0,0)(1,0) \in C \big\} \cap A \Big),$$
where, in this instance, $Z$ denotes the $y$-coordinate of the trajectory starting at $(1,0)$ on its $p$-th return to the line $\big\{ x = 1/2 \big\}$. The event $A$ is that the walk $W_{0,0}$, after visiting $(0,1)$, does not meet itself before its $p$-th return to the line $\big\{ x = 1/2 \big\}$. Noting that 
$\mathbb{P}\big( A \big\vert (0,0)(0,1) \in C \big) \geq 1 - \exp \big\{ - cn \big\}$, and that $Z = pm$ implies that $(0,0)(0,1) \in C$, we see that 
$$
\mathbb{E}\big(  N_{(p,q)}  \big) = m \mathbb{P} \big( Z = pm \big) \bigg( 1 +  O \Big( \exp \big\{ - cn \big\} \Big) \bigg).
$$ 
Noting that $Z = pm$ if and only if $\sum_{i=1}^{pn} X_i = pn + p\lfloor C \sqrt{n \log n} \rfloor$, and applying Theorem 3 of \cite{Richter}, 
we find that
$$
\mathbb{P} \Big( Z = pm \Big) = \frac{1}{2\sqrt{\pi p}} n^{- \frac{1}{2} - \frac{C^2}{4p}} \bigg( 1 +  O \Big( \frac{(\log n)^3}{\sqrt{n}} \Big) \bigg).
$$
We thus have
$$
\mathbb{E} \big( N_{(p,q)} \big)  = \frac{\sqrt{p}}{2q\sqrt{\pi}} n^{\frac{1}{2} - \frac{C^2}{4p}}  \bigg( 1 +  O \Big( \frac{(\log n)^3}{\sqrt{n}} \Big) \bigg).
$$
\end{proof}

The next two propositions, whose proofs mimic those of Propositions \ref{propthree} and \ref{propfour}, treat the secondary spike for a torus with aspect ratio close to a general rational.
\begin{prop}\label{prop}
Let $p,q \in \mathbb{N}$ satisfy $(p,q) = 1$. Set
$m = (p/q)n + C \sqrt{n\log n},$
for $\vert C \vert>\sqrt{2p}.$
Then, for $K = K(p,q),$
$$
 \mathbb{P} \Big(  \textrm{there exists a loop of length at least $\frac{n^{3/2}}{\sqrt{\log n}}$} \Big) \geq 1 - n^{\frac{1}{2} - \frac{C^2}{4p} + \epsilon},
$$
as $n \to \infty$.
\end{prop}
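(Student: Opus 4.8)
The plan is to imitate the proof of Proposition~\ref{propthree}, replacing the one‑traversal return map $\phi^*$ of the line $\{x=0\}$ by its $p$‑fold iterate; I treat the case $C<0$, the case $C>0$ being symmetric. With $\phi^*$ as in Proposition~\ref{propthree}, set $\Psi=(\phi^*)^p$, so that $\Psi(y)$ is the $y$‑coordinate at which the orbit of $(0,y)$ makes its $p$‑th return to $\{x=0\}$, that is, after exactly $p$ horizontal traversals of the torus. Being a composition of the non‑decreasing map $\phi^*$, the map $\Psi$ is non‑decreasing, and it admits a lift of the form $\Psi(y)=y-qm+\sum_{i=1}^{pn}X_i$, where the $X_i$ are the vertical displacements between consecutive rightward steps of the orbit; these are independent geometric variables of mean one and variance two, except on an event of probability $\exp\{-n^{1+o(1)}\}$ on which the orbit revisits a column at essentially the same height before $pn$ steps have elapsed. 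Since $qm=pn+qC\sqrt{n\log n}$, the increment $\Psi(y)-y$ has mean $pn-qm=q|C|\sqrt{n\log n}>0$ and variance $2pn$; it is the occurrence of $pn$, rather than $n$, as the number of summands that produces the $p$ in the denominator of the exponent.

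Now I introduce the two grids and events exactly as in Proposition~\ref{propthree}: on a grid of spacing of order $\epsilon\sqrt{n\log n}$ let $D_i$ be the event that $\Psi$ carries the $i$‑th grid point past the $(i+1)$‑st, and on a grid of spacing of order $\sqrt{n\log n}$ let $E_i$ be the event that $\Psi$ advances the $i$‑th grid point by no more than two grid‑steps. A moderate‑deviation local estimate of precisely the form used for (\ref{diineq}) and (\ref{eiineq}), now applied to $\sum_{i=1}^{pn}X_i$ (legitimately, since $\sqrt{n\log n}=o((pn)^{2/3})$), bounds $\mathbb{P}(D_i^c)$ and $\mathbb{P}(E_i^c)$ in the same way, the variance $2pn$ being exactly what inserts the factor $p$ into the exponent. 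Put $D=\bigcap_iD_i$ and $E=\bigcap_iE_i$. As in Proposition~\ref{propthree}, on $D$ the ``largest grid point below $k$'' contradiction gives $\Psi(k)>k$ for every $k\in\{0,\dots,m-1\}$, so in the lift $\Psi$ lies strictly above the identity; and on $E$, monotonicity upgrades the grid bound to $\Psi(y)-y\le 3q|C|\sqrt{n\log n}$ for all $y$.

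For the length bound, let $C_0$ be any cycle of the CRSF of homology class $(P',Q')$. Apart from an event of probability $\exp\{-\Omega(n)\}$ (on which a purely vertical loop is present) one has $P'\ge 1$, so $C_0$ crosses $\{x=0\}$ in exactly $P'$ points, which form a single $P'$‑cycle of $\phi^*$, hence decompose into $\gcd(P',p)$ orbits of $\Psi$. Fix any such $\Psi$‑orbit $O$. Lifting, $O$ closes up after $|O|$ applications of $\Psi$ having risen by $km$ for some integer $k\ge 1$ (because $\Psi$ is strictly above the identity on $D$), while each step rises by at most $3q|C|\sqrt{n\log n}$ on $E$; therefore $|O|\ge km/(3q|C|\sqrt{n\log n})\ge m/(3q|C|\sqrt{n\log n})$, and so $P'\ge|O|\ge m/(3q|C|\sqrt{n\log n})$. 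Consequently $C_0$ has at least this many strands and hence length at least $nm/(3q|C|\sqrt{n\log n})\ge n^{3/2}/(K\sqrt{\log n})$ for a suitable constant $K=K(p,q,C)$ and all large $n$ (using $m=(p/q)n(1+o(1))$). Since the CRSF always contains at least one cycle, the event $D\cap E$, minus the negligible vertical‑loop event, forces the existence of a loop of the required length; a union bound over the $O(n^{1/2})$ grid points then gives $\mathbb{P}(D^c\cup E^c)=O\big(n^{1/2-\frac{C^2}{4p}+\epsilon}\big)$, which is the asserted estimate.

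The probabilistic inputs (the moderate‑deviation estimates and the union bound) are routine and go through verbatim from Proposition~\ref{propthree}. The step requiring the most care is the construction of $\Psi$: one must check that the $pn$ geometric increments are effectively independent, so that the displayed lift formula and the variance $2pn$ are correct up to a negligible event, and one must observe that, although the $\phi^*$‑orbit of a cycle's $\{x=0\}$‑crossings may split into several $\Psi$‑orbits, each individual $\Psi$‑orbit already has at least $m/(3q|C|\sqrt{n\log n})$ points, so nothing is lost in the splitting. Neither point is serious; beyond them the argument is simply a matter of carrying the $p$‑ and $q$‑dependent constants through the proof of Proposition~\ref{propthree}.
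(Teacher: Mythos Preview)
Your approach is correct and is precisely the generalization the paper intends: replace the one–traversal return map $\phi^*$ by its $p$-fold iterate $\Psi$, then rerun the $D$/$E$ grid argument of Proposition~\ref{propthree} with mean drift $q|C|\sqrt{n\log n}$ and variance $2pn$, so that on $D\cap E$ every $\Psi$-orbit (hence every cycle) must contain $\Theta\big(m/\sqrt{n\log n}\big)$ strands. One arithmetic remark: with those parameters the moderate-deviation exponent you actually obtain is $q^2C^2/(4p)$ rather than $C^2/(4p)$, but since $q\ge1$ this only strengthens the bound in the statement, so nothing is lost.
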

\begin{prop}\label{propeight}
Set $m = (p/q)n + C \sqrt{n\log n}\big(1+o(1)\big)$ with $\vert C \vert > \sqrt{2p}$. Then, in the CRSF,
for each $\epsilon > 0$,
\begin{displaymath}
 \mathbb{P} \Big( \textrm{there exist at least two disjoint cycles} \Big)
 \leq n^{\frac{1}{2} - \frac{C^2}{4p} + \epsilon},
\end{displaymath}
for $n$ sufficiently large.
\end{prop}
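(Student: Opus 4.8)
The plan is to follow the proof of Proposition~\ref{propfour} almost verbatim, with two substitutions: the single-traversal return map of Proposition~\ref{propthree} is replaced by the $p$-fold return map used in Proposition~\ref{prop} (after $p$ horizontal traversals the mean vertical displacement is $pn$, which equals $qm$ up to a term of order $\sqrt{n\log n}$, so after subtracting $qm$ one again obtains a non-decreasing map with increments of order $\sqrt{n\log n}$), and $(1,1)$-cycles are replaced by $(p,q)$-cycles throughout.

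First I would import from the proof of Proposition~\ref{prop} the events $D$ and $E$, with $\mathbb{P}\big((D\cap E)^c\big)\le n^{1/2-C^2/(4p)+\epsilon}$ for large $n$; the integer $K=\Theta\!\big(\sqrt{n/\log n}\big)$ (with a $(p,q)$-dependent constant) counting the strands of a global cycle; and the partition of $\{x=0\}$ into consecutive intervals $C_0,\dots,C_{K-1}$, each of width $O(\sqrt{n\log n})$, obtained from the iterates $y_i$ of the $p$-fold return map. On $D$ this map carries $C_i$ into $C_{i+1}$, and on $E$ the gaps $y_{i+1}-y_i$ are $O(\sqrt{n\log n})$. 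As in Proposition~\ref{prop}, monotonicity forces every cycle present on $D\cap E$ to meet each $C_i$; in particular no short $(p,q)$-cycle can coexist with a global one there. Since two disjoint cycles share the same homology class, if a second cycle exists its intersection points with $\{x=0\}$ also run through the $C_i$.

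Next, on $D\cap E$, I would suppose there are two disjoint cycles and extract a constraint. Build the CRSF by running $W_{0,0}$ first (producing the global cycle, its intersection sequence $a_i\in C_i$, and the events $D,E$), then launching walks $W_{0,w_i}$ from uncovered sites of $\{x=0\}$. For a second cycle to form, some such walk $W_{0,z_0}$, started strictly between two consecutive $a_i$, must avoid the first cycle until meeting its own trace; monotonicity of the return map together with the interleaving of disjoint cycles then pins its successive returns $z_i$ to $a_i<z_i<a_{i+1}$ for all $i$ up to $K-O(1)$. Subsampling at multiples of $L=\lfloor n^{\epsilon}\rfloor$, each increment $z_{(j+1)L}-z_{jL}$ is thereby confined to an interval of width $O(\sqrt{n\log n})$ whose position depends only on the $a_i$. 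Since this increment is a centred sum of $\Theta(nL)$ independent geometric random variables (the vertical displacements of $W_{0,z_0}$ between rightward steps), independent of $W_{0,0}$, Theorem~3 of \cite{Richter} bounds the probability it lands in such an interval by $O\!\big(\sqrt{n\log n}/\sqrt{nL}\big)=O\!\big(n^{-\epsilon/2}\sqrt{\log n}\big)\le n^{-\epsilon/3}$; the unblocked version of this estimate is useless, the corresponding probability being of order $\sqrt{\log n}$, which is exactly why the blocking is needed. As the $\Theta(K/L)$ constraints involve disjoint blocks of increments, the conditional probability of a second cycle on $D\cap E$ is at most $\big(n^{-\epsilon/3}\big)^{\Theta(K/L)}$, super-polynomially small in $n$ and hence negligible beside $\mathbb{P}\big((D\cap E)^c\big)$.

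Combining, $\mathbb{P}(\text{two disjoint cycles})\le\mathbb{P}\big((D\cap E)^c\big)+\exp\{-n^{1/2-2\epsilon}\}\le n^{1/2-C^2/(4p)+\epsilon}$ for large $n$, the polynomial term dominating; this is consistent with the Remark, since the one surviving route to two cycles is the creation of two short $(p,q)$-cycles by local fluctuations, an event of probability polynomial in $n$ and of the same order as $\mathbb{E}N_{(p,q)}$ in Proposition~\ref{pq2}. The step I expect to be the main obstacle is the purely geometric one: showing that a disjoint second cycle must interleave strand-by-strand with the first, so that its return sequence is trapped in the narrow windows $(a_i,a_{i+1})$. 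Making this rigorous requires care with the monotonicity of the iterated return map on the circle $\{x=0\}$, with the bookkeeping of which cycle is "first", and with passing from these windows to blocks of the walk's increments; once that is in place, the probabilistic local-limit estimate is routine given the apparatus assembled for Proposition~\ref{prop}.
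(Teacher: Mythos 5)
Your proposal is correct and takes essentially the same approach as the paper, which in fact only states that Propositions~\ref{prop} and~\ref{propeight} ``mimic'' the proofs of Propositions~\ref{propthree} and~\ref{propfour}; you have supplied exactly the intended substitutions (the $p$-fold return map in place of the single-traversal one, $(p,q)$-cycles in place of $(1,1)$-cycles) together with the interleaving argument, the block-of-$L$ subsampling, the local limit estimate from \cite{Richter}, and the final union bound in which the polynomial $\mathbb{P}\big((D\cap E)^c\big)$ term dominates the super-polynomially small conditional probability.
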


\old{ Let $\big\{ y_{0,j}^1: j \in \mathbb{N} \big\}$
denote the subsequence of return locations $\big\{ y_{0,i}: i \in \mathbb{N} \big\}$ to $\big\{ x = 0 \big\}$ such that $y_{0,j}^1 \in \{0,\ldots,k-1\}$ for each $j \in \mathbb{N}$. The sequence  $\big\{ y_{0,j}^1: j \in \mathbb{N} \big\}$ consists of the $y$-coordinates of return locations after the completion of global trips with corridor of order $k$.

We write
$$
 y_{0,i+1}^1 - y_{0,i}^1 = p f(k) + \sqrt{pqk} X_i
$$ 
for each $i \in \mathbb{N}$, and note that the sequence $\big\{ X_i: i \in \mathbb{N} \big\}$ is distributed as a seqeunce of independent standard normal random variables, asymptotically in large $n$.
A global loop will form when $y_{0,i}^1 \approx k$, provided that no self-intersections have occurred before that time. Thus, such a loop exists provided that
$$
 y_{0,i+1}^1 - y_{0,i}^1 > 0 \, \, \textrm{for each $i \in \big\{ 1,\ldots, \frac{k}{p f(k)} \big\}$.}
$$
Noting that $y_{0,i+1}^1 - y_{0,i}^1 < 0$ if and only if 
$X_i < - f(k) \sqrt{\frac{p}{qk}}$, we have that
$$
 \mathbb{P} \Big( y_{0,i+1}^1 - y_{0,i}^1 <  0 \Big) 
 \approx \exp \Big\{ - \frac{f(k)^2 p}{2qk} \Big\}.
$$
This implies that
\begin{eqnarray}
 & & \mathbb{P} \Big( y_{0,i+1}^1 - y_{0,i}^1 >  0  \, \textrm{for each $i \in \big\{ 1,\ldots, \frac{k}{p f(k)} \big\}$} \Big)  \label{hhj} \\
 & \approx & \Big( 1 - \exp \Big\{ - \frac{f(k)^2 p}{2qk} \Big\} \Big)^{\frac{k}{pf(k)}} \nonumber \\
 & \approx & 1 - \frac{k}{pf(k)}  \exp \Big\{ - \frac{f(k)^2 p}{2qk} \Big\} = 1 - \frac{1}{C \sqrt{pq \log k}} k^{\frac{1}{2}(1 - C^2)}, \nonumber
\end{eqnarray} 
where we used the fact that $f(k)^2 p >> 2qk$ in the second approximation.
Given that $C \in (1,\infty)$, we see that the term in (\ref{hhj}) tends to zero as $k \to \infty$. The global loop that is likely to form has length at least a small constant mutliple of 
$$
 2pqk \cdot \frac{k}{pf(k)} = \frac{2qk^2}{f(k)}.
$$
This completes the proof of the proposition. }

\section{The irrational regime}\label{irrationalsection}

Let $m,n \in \mathbb{N}$ with $Cn > m > n$ for a constant $C>1$.

We begin by collecting some elementary facts about continued fractions.
These can be found in, for example, \cite{HW}.
Let 
$$\frac{m}{n}=a_0+\frac1{a_1+\frac1{a_2+\dots+\frac1{a_l}}}$$
be the continued fraction decomposition of $m/n$. 
Define $p_0/q_0=a_0$ and for $0\leq j\leq l$, define
$$\frac{p_j}{q_j}=a_0+\frac1{a_1+\frac1{a_2+\dots+\frac1{a_j}}}$$
to be the rational approximants to $m/n$.

We have $\frac{p_k}{q_k}<\frac{m}n\leq \frac{p_{k+1}}{q_{k+1}}$
for $k \leq l$ even and $\frac{p_{k+1}}{q_{k+1}}\leq\frac{m}n< \frac{p_k}{q_k}$
for $k \leq l$ odd. In each case, $\frac{p_{k+1}}{q_{k+1}}$ is the closer endpoint to $m/n$.
Also $|\frac{p_k}{q_k}-\frac{p_{k+1}}{q_{k+1}}|=\frac1{q_kq_{k+1}}.$
Hence, 
\begin{equation}\label{approx}
\frac{n}{2q_{j+1}}<|np_{j}-mq_{j}|<\frac{n}{q_{j+1}}
\end{equation}

We also have
\begin{eqnarray*}
p_k&=&a_kp_{k-1}+p_{k-2}\\
q_k&=&a_kq_{k-1}+q_{k-2}.
\end{eqnarray*}

Choose $j_0$ so that 
\begin{equation}\label{j0def}
q_{j_0}\leq n^{1/3}<q_{j_0+1}.
\end{equation}

For typical $m,n$ the $a_k$ are $O(1)$; in fact the Gauss-Kuz'min law says
that the probability that $a_k>x$ is of
order $1/x$ for $x$ large. Moreover $j_0$ is typically of order $C\log n$
for a (known) constant $C$, see \cite{Bill}.

\begin{theorem} Define $j_0$ as in (\ref{j0def}).
For each $k \in \mathbb{N}$, there exists $c = c(k) > 0$ independent of $n$ and $m$, such that the probability that there at least $k$ cycles, each  of homology class $(p_{j_0},q_{j_0})$, is at least $c$.
\end{theorem}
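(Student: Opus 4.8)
The plan is to run the strip/invariance-principle argument of Propositions \ref{propone} and \ref{propfive} at the scale dictated by $j_0$, and then to boost ``at least one cycle'' to ``at least $k$'' by an iterated resampling. First I would record the relevant scales. By (\ref{j0def}) and (\ref{approx}) we have $\tfrac{n}{2q_{j_0+1}}<|np_{j_0}-mq_{j_0}|<\tfrac{n}{q_{j_0+1}}<n^{2/3}$, and since $|m/n-p_{j_0}/q_{j_0}|\le q_{j_0}^{-2}$ with $m/n\in(1,C)$ we get $p_{j_0}\le (C+1)q_{j_0}=O(n^{1/3})$. Hence a $(p_{j_0},q_{j_0})$-cycle has length $np_{j_0}+mq_{j_0}=O(n^{4/3})$; the transverse fluctuation of an up-right walk over such a length is of order $\sqrt{np_{j_0}}=O(n^{2/3})$; the ``drift'' $|np_{j_0}-mq_{j_0}|$ is also $O(n^{2/3})$; and the transverse extent $\asymp nm/(np_{j_0}+mq_{j_0})$ of $T_{n,m}$ in the $(p_{j_0},q_{j_0})$-direction is $O(n^{2/3})$ as well. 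In the typical case $q_{j_0}\asymp n^{1/3}$ all four quantities are of exact order $n^{2/3}$.

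Next, as in the proof of Proposition \ref{propfive}, I would partition $T_{n,m}$ into strips parallel to the closed geodesic of homology class $(p_{j_0},q_{j_0})$, each of transverse width $Kn^{2/3}$ for a large constant $K=K(C)$; each such strip winds $p_{j_0}$ times horizontally and $q_{j_0}$ times vertically before closing. Running $W_{0,z}$ from the centre $z$ of a strip, stopped when it exits the strip or meets its trace, I would show — by the analogue of the Brownian-bridge computation (\ref{bbeqn}) after rescaling the transverse coordinate by $n^{2/3}$ — that with probability bounded below by some $c_0>0$ independent of $n,m$, the walk stays in the strip for the two periods needed and its successive returns to $\{x=0\}$ (recorded in $p_{j_0}$ interlaced maxima/minima lists, as in Proposition \ref{propfive}) are ordered so as to force a self-intersection producing a $(p_{j_0},q_{j_0})$-cycle inside the strip. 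Since two disjoint closed orbits necessarily have the same homology class, as soon as one strip succeeds every cycle of the CRSF has class $(p_{j_0},q_{j_0})$.

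To obtain $\ge k$ cycles I would argue in two cases. When $q_{j_0}=o(n^{1/3})$ the number of strips, $\asymp nm/\big((np_{j_0}+mq_{j_0})n^{2/3}\big)\asymp n^{1/3}/q_{j_0}$, exceeds $k$; their success events use disjoint coordinates of $\phi$, hence are independent, and a binomial bound finishes as in Proposition \ref{propone}. In general — in particular when $q_{j_0}\asymp n^{1/3}$, where only $O(1)$ strips fit — I would iterate instead: run $W_{0,0}$ to its self-intersection, which with probability $\ge c_0$ produces one $(p_{j_0},q_{j_0})$-cycle $C_1$ and a short attached tree; then, conditionally on $C_1,\dots,C_j$, run a fresh walk started just on the side of $C_j$ into which the drift $np_{j_0}-mq_{j_0}$ pushes. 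The uncovered region is still a topological annulus whose unrolled dimensions are of order $n^{4/3}\times n^{2/3}$ apart from the lower-order area occupied by the trees, so the same invariance-principle estimate produces a further disjoint $(p_{j_0},q_{j_0})$-cycle $C_{j+1}$ with conditional probability $\ge c_0$; multiplying over $j=0,\dots,k-1$ gives the claim with $c(k)=c_0^{\,k}$.

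The main obstacle is the invariance-principle step: one must show the walk closes up with \emph{exactly} homology class $(p_{j_0},q_{j_0})$, and not that of a neighbouring convergent, with probability bounded below uniformly in $n,m$. This is where the choice of $j_0$ is essential — $p_{j_0}/q_{j_0}$ is precisely the convergent whose denominator sits at the fluctuation scale $n^{1/3}$, so that the walk's transverse motion over one period is comparable to the transverse size of the torus — and it is combined with the monotonicity of the return map $\phi^*$, which forces the common homology class globally once one $(p_{j_0},q_{j_0})$-cycle is present and so excludes the competing classes. Controlling atypical continued-fraction expansions (for instance a single very large partial quotient near index $j_0$, which unbalances the drift and fluctuation scales) inside this scheme is the delicate point.
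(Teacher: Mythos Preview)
Your core idea---run the strip/invariance-principle argument of Propositions~\ref{propone} and \ref{propfive} at the $n^{2/3}$ scale---is exactly the paper's approach. The main divergence is in how you pass from one cycle to $k$. The paper takes the strip width to be a \emph{small} constant multiple of $n^{2/3}$, namely $n^{2/3}/\bigl(3(C+1)\bigr)$, not a large one: since the vertical spacing between strands of the $(p_{j_0},q_{j_0})$ line is $m/p_{j_0}\ge n^{2/3}/(C+1)$, this width is at most one third of the gap, so two disjoint translates $S_1,S_2$ always fit (and, shrinking by a further factor depending on $k$, so do $k$ translates). For each $S_i$ the event ``the walk started at the centre stays in $S_i$ for two circuits, returning first to the upper half and then to the lower'' forces a $(p_{j_0},q_{j_0})$-cycle inside $S_i$; these events are independent and the paper bounds each below by Donsker. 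Your choice of ``large $K$'' is what creates the shortage of strips when $q_{j_0}\asymp n^{1/3}$ and hence the need for your iterated-resampling case; with the paper's width the independent-strips argument covers all $(n,m)$ at once, and the iteration is unnecessary.

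Your worry that the walk might close up in the class of a neighbouring convergent is also not an obstacle: once the strip is embedded (which the width choice guarantees), any self-intersection of a monotone walk confined to it is automatically a cycle of class exactly $(p_{j_0},q_{j_0})$---no appeal to the global monotonicity of $\phi^*$ is needed. Your final concern, however, is substantive. The ``upper-half then lower-half'' event asks the per-circuit displacement, which has mean $np_{j_0}-mq_{j_0}$ and standard deviation $\sqrt{2np_{j_0}}$, to take both signs; this has probability bounded away from zero only when the ratio $|np_{j_0}-mq_{j_0}|/\sqrt{np_{j_0}}$ is bounded, which fails when $q_{j_0}\ll n^{1/3}$ while $q_{j_0+1}$ is only just above $n^{1/3}$ (e.g.\ $m=n+\lfloor n^{0.6}\rfloor$, where $j_0=0$ and the drift $n^{0.6}$ dwarfs the fluctuation $\sqrt{n}$). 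The paper simply asserts the Donsker step holds ``uniformly in $n\ge n_0$ and $n<m<Cn$'' and does not confront this regime; so on this point your proposal is no more incomplete than the paper's own proof.
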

\noindent{\bf Remark.} The length of each such cycle is equal to $np_{j_0} + mq_{j_0} \geq  n q_{j_0} \geq  n q_{j_0 + 1}/(a_{j_0} + 1) > n^{4/3}/(a_{j_0} + 1)$. Since $a_{j_0}$ is typically $O(1)$, we see that, for $n$ large and most choices of $m$, cycles of length $n^{4/3}$ form with positive probability in the CRSF configuration. 

\proof
We argue the case $k = 2$, the general one being no harder. 
Note that
$$
\Big\vert \frac{p_{j_0}}{q_{j_0}} - \frac{m}{n} \Big\vert
 < \frac{1}{q_{j_0} q_{j_0+1}},
$$
so that 
\begin{equation}\label{pqmn}
\Big\vert \frac{p_{j_0}}{q_{j_0}} - \frac{m}{n} \Big\vert
 < \frac{1}{q_{j_0} n^{1/3}},
\end{equation}
which implies, by $1 \leq \frac{m}{n} \leq C$ and (\ref{j0def}), that
$$
 \frac{q_{j_0}}{2} \leq p_{j_0} < (C+1) q_{j_0}.
$$ 
Using (\ref{j0def}), then,
\begin{equation}\label{pj0}
\frac{p_{j_0}}{C + 1} \leq n^{1/3}. 
\end{equation}
Note that, by (\ref{pqmn}),
$|np_{j_0}-mq_{j_0}|<n^{2/3}$.
Let $R_1$ be the closed line/loop through the origin
of slope $mq_{j_0}/np_{j_0}$ on the torus.
The vertical distance between strands of $R_1$ is $m/p_{j_0}\ge \frac{n^{2/3}}{C+1}$. 
Let $S_1$ be a strip centered on $R_1$ and of width $\frac{n^{2/3}}{3(C+1)}$.
Let $S_2$ be a translate of $S_1$ which is disjoint from $S_1$.

The probability that $W_{0,0}$ does not exit $S_1$ before making a cycle
can be bounded as follows. 
It suffices that: \begin{enumerate}
\item The walk stays in the strip for two circuits, i.e. 
for $2p_{j_0}$ returns to the line $x=0$.
\item At the end of the first circuit (at the $p_{j_0}$-th return) 
the walk is in the upper half of the strip.
\item At the end of the second circuit the walk is in the lower half of the strip.
\end{enumerate}
 In the notation used to argue that $\mathbb{P} \big( E_i \big) \geq c$
in the proof of Proposition \ref{propone},
the first event is
$$\left\{\sup_{\ell\in\{1,2,\dots,2p_{j_0}n\}}\left|
\sum_{i=1}^{\ell}X_i-\ell \frac{mq_{j_0}}{np_{j_0}} \right|\leq \frac{n^{2/3}}{6(C+1)} \right\}.$$
Using $|1-\frac{mq_{j_0}}{np_{j_0}}|\leq \frac1{p_{j_0}n^{1/3}}$, 
we may now argue similarly to the deduction of $\mathbb{P}(E_i) \geq c$ 
that the three events listed above occur simultaneously with a probability that is positive, uniformly in $n \geq n_0$ and $n < m < Cn$. $\Box$

\begin{theorem} Fix $\eps>0$, and 
choose $j$ so that $q_j \leq n^{1/3-\eps/2}< q_{j+1}$.
Suppose that $a_{j}<n^{\eps/2}$ and  $a_{j+1} \leq n^{\epsilon/4}$. 
Then
$$\mathbb{P}\Big(\text{there is a cycle of length
at most $O\big(n^{4/3-\eps}\big)$}\Big) \leq \exp \big\{ -c n^{\eps/2} \big\}.$$
\end{theorem}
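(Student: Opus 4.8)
\emph{Proof sketch.}
The plan is a first-moment estimate over homology classes, the one substantial ingredient being a Diophantine lower bound coming from the hypotheses on the partial quotients. Fix the implied constant $c_0$ in the statement, so that the goal is to bound the probability that some cycle has length at most $c_0 n^{4/3-\eps}$. Such a cycle has homology class $(p,q)$ with $np+mq\le c_0 n^{4/3-\eps}$, and since $n\le m\le Cn$ this forces $p,q\le c_0 n^{1/3-\eps}$; let $\mathcal D$ be the set of these coprime pairs, so $|\mathcal D|\le c_0^2 n^{2/3}$. As two disjoint cycles share a homology class, it suffices to show $\sum_{(p,q)\in\mathcal D}\Prob\big(N_{(p,q)}\ge 1\big)\le\exp\{-c n^{\eps/2}\}$.

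For a single class I would bound $\Prob(N_{(p,q)}\ge1)\le\E N_{(p,q)}$ by a first moment, in the spirit of Propositions \ref{proptwo} and \ref{pq2}. A simple closed monotone $(p,q)$-curve, cut at its lowest crossing of $\{x=\tfrac12\}$ and lifted to $\Z^2$, is a monotone lattice path of length $pn+qm$ starting at one of $m$ heights, and $\phi$ realises a prescribed such curve with probability $2^{-(pn+qm)}$. Hence, by Hoeffding's inequality,
$$
\E N_{(p,q)} \le m\binom{pn+qm}{pn}2^{-(pn+qm)} = m\,\Prob\big(\mathrm{Bin}(pn+qm,\tfrac12)=pn\big) \le m\exp\Big\{-\frac{(pn-qm)^2}{2(pn+qm)}\Big\}.
$$
(Alternatively one runs the trajectory of $W_{0,y}$ and invokes the moderate-deviation local limit theorem of \cite{Richter}, with a crude large-deviation bound when $|pn-qm|$ leaves that window, just as in Sections \ref{sqrt} and \ref{p/q}; the combinatorial bound above is cleaner and uniform in the deviation.)

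The crux is the Diophantine bound $(pn-qm)^2/(pn+qm)\ge n^{\eps/2}$ for every $(p,q)\in\mathcal D$. Since $pn+qm\le c_0(1+C)n^{4/3-\eps}$, it is enough to show $|pn-qm|\ge\tfrac14 n^{2/3+\eps/4}$ for all coprime $(p,q)$ with $q\le c_0 n^{1/3-\eps}$; the case $q=0$ forces $(p,q)=(1,0)$ with $|pn-qm|=n$, so assume $q\ge 1$. Here the continued-fraction facts recalled before the theorem enter. For $n$ large, $c_0 n^{1/3-\eps}<q_{j+1}$, so any such $q$ satisfies $q<q_{j+1}$; let $j'\le j$ be the largest index with $q_{j'}\le c_0 n^{1/3-\eps}$. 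By the best-approximation property of the convergents in its sharp form (\cite{HW}), every coprime $(p,q)$ with $1\le q<q_{j'+1}$ and $(p,q)\ne(p_{j'},q_{j'})$ has $|qm-pn|>|q_{j'}m-p_{j'}n|$, while $|q_{j'}m-p_{j'}n|>\tfrac{n}{2q_{j'+1}}$ by (\ref{approx}). Feeding the hypotheses $a_j<n^{\eps/2}$ and $a_{j+1}\le n^{\eps/4}$ and $q_j\le n^{1/3-\eps/2}$ into $q_{k+1}=a_{k+1}q_k+q_{k-1}$ gives $q_{j'+1}\le q_{j+1}\le(a_{j+1}+1)q_j\le 2n^{1/3-\eps/4}$, whence $|pn-qm|>\tfrac14 n^{2/3+\eps/4}$ throughout $\mathcal D$ and $(pn-qm)^2/(pn+qm)\ge c\,n^{3\eps/2}$. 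Combining with the first-moment bound and summing over the $O(n^{2/3})$ classes of $\mathcal D$ proves the theorem, in fact with the stronger bound $\exp\{-cn^{3\eps/2}\}$.

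The step I expect to be the main obstacle is the Diophantine one: it requires the best-approximation theorem in the form valid for all denominators below $q_{k+1}$ rather than below $q_k$, and one must check that the two hypotheses on the partial quotients are precisely what prevents a rational of denominator at most about $n^{1/3-\eps}$ from approximating $m/n$ to closer than about $n^{-4/3+\eps/4}$. The probabilistic content — a first-moment count and a binomial tail estimate — is then routine, given the machinery already in Sections \ref{sqrt} and \ref{p/q}.
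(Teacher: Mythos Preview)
Your argument is correct, and it follows a genuinely different route from the paper's.

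The paper proceeds geometrically: it uses (\ref{approx}) to build an embedded strip $U_j$ of width at least $n^{2/3}/4$ and horizontal length $np_j\le (C+1)n^{4/3-\eps/2}$ around the ray of slope $1$ through the origin, and then shows that if $W_{0,0}$ stays inside $U_j$ until it hits its own trace, the resulting cycle has length at least $\tfrac12 n^{4/3-\frac34\eps}$. The probabilistic input is then a single moderate-deviation bound on $\max_{j\le np_j}\big|\sum_{i\le j}X_i-j\big|$, which is where the exponent $n^{\eps/2}$ appears. By contrast, you never look at the walk at all beyond a one-step combinatorial count: you enumerate the admissible homology classes, bound $\E N_{(p,q)}$ by a central binomial probability, and push all the work into the Diophantine lower bound $|pn-qm|\gtrsim n^{2/3+\eps/4}$ via the best-approximation theorem. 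Two incidental gains of your approach are worth noting: your exponent $n^{3\eps/2}$ is better than the paper's $n^{\eps/2}$, and your Diophantine step only uses $a_{j+1}\le n^{\eps/4}$ together with $q_j\le n^{1/3-\eps/2}<q_{j+1}$ (the hypothesis $a_j<n^{\eps/2}$, which you list, is not actually invoked in your chain $q_{j'+1}\le q_{j+1}\le(a_{j+1}+1)q_j\le 2n^{1/3-\eps/4}$, and indeed since $q\le c_0 n^{1/3-\eps}<n^{1/3-\eps/2}<q_{j+1}$ you can dispense with $j'$ and apply best approximation directly at index $j$). What the paper's strip argument buys, on the other hand, is a picture that feeds naturally into the companion upper bound on cycle length in the next theorem, where the same fluctuation analysis is reused.
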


\begin{proof}
From (\ref{approx}) we have $|np_j-mq_j|>\frac{n}{2q_{j+1}}$. This implies that
after $p_j$ traversals, the strands of the ray $R$ of slope $1$ starting at the origin
do not come within $\frac{n}{2q_{j+1}}$ of each other. 
In particular, 
there is an embedded strip $U=U_j$, centered on the ray $R$ starting at the origin, 
of width $\frac{n}{2q_{j+1}}$ and 
horizontal length $np_j$. 

By assumption, $a_j<n^{\eps/2}$
so that $q_{j+1}<(a_j+1)q_j< \big( n^{\eps/2}+1 \big) n^{1/3-\eps/2}< 2 n^{1/3}$.
This implies that the strip $U_j$ has width at least $n^{2/3}/4$.
Arguing similarly to (\ref{pj0}),
\begin{equation}\label{pjl}
 \frac{p_j}{C  + 1} \leq n^{1/3 - \epsilon/2}.
\end{equation}
Suppose that the CRSF configuration is formed by firstly running the walk $W_{0,0}$ until it hits its trace. If the walk $W_{0,0}$ remains in the strip $U_j$ until it hits itself, forming a cycle, then the cycle to which $(0,0)$ belongs has length at least $m q_j \geq n q_{j+1}/(a_{j+1} + 1) \geq 2^{-1} n^{4/3 - (3/4) \epsilon}$, since $a_{j+1} \leq n^{\epsilon/4}$. 
Each cycle has the same length. Hence,  the event that there is a cycle of length at most $n^{4/3- \eps}$ implies that
the walk $W_{0,0}$ leaves the strip $U_j$ before reaching its end. The strip $U_j$ having horizontal length $np_j \leq (C+ 1)n^{4/3 - \epsilon/2}$ by (\ref{pjl}), we see that, with $\big\{ X_i: i \in \mathbb{N}\big\}$ being defined in the proof of Proposition \ref{propone}, if 
$\big\vert \sum_{i=1}^j X_i - j\big\vert \leq n^{2/3}/8$ for 
each $j \in \big\{ 1,\ldots, \lfloor (C+1) n^{4/3 - \epsilon/2} \rfloor \big\}$, then the walk $W_{0,0}$ remains in $U_j$ until reaching its end.  

A brief argument using Theorem 5.23 of \cite{Petrov} yields
$$
\mathbb{P}\left(\max_{0\leq j\leq (C+1) n^{4/3-\eps/2}} \left|\sum_{i=1}^j X_i -j\right|
< n^{2/3}/8
\right)
>1-e^{-c n^{\eps/2}}$$ which completes the proof.
\end{proof}

\begin{theorem}
Fix $\eps>0$ and
choose $k<\ell$ so that $p_{k}\le n^{1/3-\frac{7}{48}\eps}<p_{k+1}$
and $p_{\ell}\le n^{1/3+\frac{47}{48}\eps}<p_{\ell+1}$. 
Suppose that $\max \big\{ a_{k+1},a_{l+1} \big\} <n^{\frac{5}{48}\eps}$.  Then 
$$\mathbb{P}\Big(\text{there is a cycle of length
at least $n^{4/3+ \eps})$}\Big) \leq  \exp \Big\{ -n^{\frac{\eps}{7}} \Big\}.$$
\end{theorem}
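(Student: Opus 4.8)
Since disjoint cycles of a CRSF all share a homology class $(p,q)$, and hence a common length $np+mq$, it suffices to control the cycle $C_0$ produced by growing $W_{00}$ from the origin until it meets its own trace: $C_0$ occupies $p$ of the walk's traversals, and off an event of probability $\le e^{-cn}$ — a large deviation bound for the sum $\mathrm{up}_1+\dots+\mathrm{up}_p$ of the walk's vertical displacements, which rules out the anomalously tall case $q\ge p$ — this means $C_0$ has length $\le(1+C)np$. The plan is therefore to prove that with probability $\ge1-\exp\{-n^{\eps/7}\}$ the walk $W_{00}$ closes up within $P:=\lceil n^{1/3+\eps}/(1+C)\rceil$ traversals, for then $C_0$ has length $<n^{4/3+\eps}$. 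The hypotheses enter only through the continued-fraction identities collected before (\ref{approx}): they give $p_{k+1}\le n^{1/3-\eps/24}$ and $n^{1/3+(7/8)\eps}\le p_\ell\le n^{1/3+(47/48)\eps}$, and, together with (\ref{j0def}), they guarantee that the smallest scale at which $W_{00}$ can close up — the least $s$ for which $\mathrm{dist}(sn,m\mathbb Z)\le c\sqrt{sn}$, this being the resonance condition that lets a strand's transverse fluctuation of order $\sqrt n$ land on a strand laid down $s$ traversals earlier — lies in the window $[\,n^{1/3},\,n^{1/3+(47/48)\eps}\,]$ (built from primary and, where $m/n$ has a large partial quotient near scale $n^{1/3}$, secondary convergents of $n/m$, whose denominators the bounds on $a_{k+1},a_{\ell+1}$ keep under control).

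First I would record, exactly as in the proof of Proposition~\ref{propone}, the reduction of ``$W_{00}$ has not closed after $T$ traversals'' to a statement about the increments. Write $h_i$ for the unwound height of $W_{00}$ at the start of its $i$-th traversal, so $h_{i+1}-h_i=\mathrm{up}_i$ is a sum of $n$ i.i.d.\ geometric variables of mean $1$ and variance $2$, and the trace is a concatenation of monotone staircases (strands), the $i$-th running over the $n$ columns from height $h_i$. A monotonicity/intermediate-value argument as in Proposition~\ref{propone} shows that the strand of traversal $j$ crosses that of traversal $i<j$ — closing the walk — with probability at least a universal constant $c_0$ whenever $\mathrm{dist}(h_i-h_j\bmod m,\,m\mathbb Z)\le\sqrt n$, since the transverse difference walk $\sum_c(g_{i,c}-g_{j,c})$ has variance $\Theta(n)$ over the $n$ columns; and such a crossing produces a cycle of homology $\approx(j-i,\,\lfloor (j-i)n/m\rceil)$, hence of length $\le(1+C)n(j-i)$.

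Second I would run the ``circuit'' argument at the resonant scale $s^\ast$ just described. Grouping the traversals into consecutive circuits of $s^\ast$ traversals, the walk at the end of circuit $t$ lies a displacement $h_{ts^\ast}-h_{(t-1)s^\ast}=\mathrm{up}_{(t-1)s^\ast}+\dots+\mathrm{up}_{ts^\ast-1}$ from where it was one circuit earlier; modulo $m$ this equals $\pm\,\mathrm{dist}(s^\ast n,m\mathbb Z)$ plus a fresh centred sum of $s^\ast n$ geometrics, which by the local moderate-deviation estimates already used — Theorem~3 of \cite{Richter}, Theorem~5.23 of \cite{Petrov}, and Donsker's theorem as in Proposition~\ref{propone} — has standard deviation $\Theta(\sqrt{s^\ast n})$, a quantity comparable both to $\mathrm{dist}(s^\ast n,m\mathbb Z)$ and to the spacing $\Theta(m/s^\ast)$ of the strands of the previous circuit (this comparability is the meaning of ``$s^\ast\gtrsim n^{1/3}$''). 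Conditioning on the configuration at the start of each circuit, one therefore gets that the next circuit causes the walk to stray onto a strand of the previous circuit — closing $C_0$ with homology of the stated size $s^\ast$ — with conditional probability at least a lower bound $c_1=c_1(C)>0$, uniformly in the past and in $n$. Since $P/s^\ast\ge n^{\Omega(\eps)}$, and since closing up at scale $s^\ast$ during any circuit yields a cycle shorter than $n^{4/3+\eps}$, we obtain
\[
\Prob\big(\text{$C_0$ has length}\ \ge n^{4/3+\eps}\big)\ \le\ e^{-cn}\ +\ (1-c_1)^{\,P/s^\ast-1}\ \le\ \exp\{-n^{\eps/7}\}
\]
for $n$ large.

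The step I expect to be the main obstacle is precisely the justification of that display. On the probabilistic side one must pin down the per-circuit lower bound $c_1$ rigorously: the relevant inter-strand ``gap'' that the walk must stray across is determined by the walk's own past, so the estimate has to be uniform over all conditionings, and one must control the drift that accumulates over many circuits (which is why the argument is phrased circuit-by-circuit rather than pair-by-pair, and why it may be cleanest to re-choose the reference strand at the start of each circuit). On the arithmetic side one must carry out the continued-fraction bookkeeping that places $s^\ast$ in the stated window and exhibits at least $n^{\eps/7}$ admissible circuits in every case — including when $m/n$ has a moderately large partial quotient between scales $k$ and $\ell$, where $s^\ast$ is a secondary convergent and one argues separately that closing at the next available resonant scale still gives a cycle of length below $n^{4/3+\eps}$; it is this case analysis, not the probabilistic core, that forces the particular exponents $7/48$, $47/48$, $5/48$ and the value $\eps/7$.
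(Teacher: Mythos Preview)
Your approach is genuinely different from the paper's, and the difference is instructive.

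The paper does \emph{not} iterate a per-circuit closing probability. Instead it defines two events on the raw walk $W_{0,0}$:
\[
E=\Big\{\max_{j\le np_k}\big|\textstyle\sum_{i\le j}X_i-j\big|\le n^{2/3}\Big\},
\qquad
F=\Big\{\max_{j\le np_\ell}\big|\textstyle\sum_{i\le j}X_i-j\big|\ge n^{2/3+\delta\eps}\Big\},
\]
and argues \emph{deterministically} that on $E\cap F$ the walk self-intersects before its $p_\ell$-th traversal. The point is that on $E$ the first $p_k$ strands of $W_{0,0}$ lie within $n^{2/3}$ of the strands of the slope-$1$ ray $R$, and those ray-strands have gaps of size at most $m/p_k\le O(n^{2/3+(\alpha+\beta)\eps})$ by the three-distance structure and the bound on $a_{k+1}$. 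On $F$ the walk later wanders $n^{2/3+\delta\eps}$ from $R$; since $\delta>\alpha+\beta$, it must have crossed one of its own earlier strands by the intermediate value theorem. A moderate-deviation bound gives $\Prob(E^c)\le e^{-n^{\beta\eps+o(1)}}$ and a small-ball estimate gives $\Prob(F^c)\le e^{-n^{(\gamma-\alpha-2\delta)\eps+o(1)}}$; plugging in $\alpha=5/48$, $\beta=7/48$, $\delta=13/48$, $\gamma=47/48$ yields the exponent $\eps/7$.

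Your circuit scheme, by contrast, must establish a conditional lower bound $c_1$ that is \emph{uniform over every history} of the walk, and this is where the proposal has a real gap. The channel in which the walk sits at the start of circuit $t$ is bounded by two earlier strands whose positions are random and depend on the entire past; you have not shown that this channel width is $O(\sqrt{s^\ast n})$ on \emph{every} non-closed history, only heuristically on typical ones. The paper's $E$-$F$ device sidesteps this entirely: it never conditions on survival, and the crossing is forced by a single deterministic comparison of scales ($n^{2/3+\delta\eps}$ versus the maximal gap $n^{2/3+(\alpha+\beta)\eps}$), not by a probabilistic iteration. There is also a quantitative mismatch in your sketch: with $s^\ast$ allowed up to $n^{1/3+(47/48)\eps}$ and $P\asymp n^{1/3+\eps}$, one gets only $P/s^\ast\ge n^{\eps/48}$ circuits, not $n^{\eps/7}$, so the displayed bound $(1-c_1)^{P/s^\ast-1}\le e^{-n^{\eps/7}}$ does not follow from what you have written; to recover $\eps/7$ you would need either a sharper upper bound on $s^\ast$ or to let $c_1$ grow with $s^\ast$, neither of which you have argued. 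The paper's choice of exponents is tuned precisely so that both $\beta$ and $\gamma-\alpha-2\delta$ exceed $1/7$, which is why those particular fractions appear.
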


\begin{proof}
We in fact prove a modified statement: choose $k \leq \ell$ so that $p_{k}\le n^{1/3-\beta\eps}<p_{k+1}$
and $p_{\ell}\le n^{1/3+\gamma\eps}<p_{\ell+1}$, and  
suppose that $\max \big\{ a_{k+1},a_{l+1} \big\} <n^{\alpha\eps}$. Let $\delta > \alpha + \beta$ with $\beta > \alpha$. Then 
\begin{equation}\label{pefeqn}
\mathbb{P}\Big(\text{there is a cycle of length
at least $3 n^{4/3+\gamma\eps}$}\Big)\leq  e^{-n^{\beta\eps+o(1)}}+e^{-n^{(\gamma-\alpha-2\delta)\eps+o(1)}}.
\end{equation}
The statement of the proposition then follows by the choices $\alpha = 5/48$, $\beta = 7/48$, $\delta = 13/48$ and $\gamma =47/48$.

Let $E$ be the event 
$$E=\left\{\max_{j\in\{1,\dots,np_{k}\}}\left|\sum_{i=1}^j X_i-j\right|\leq 
n^{2/3}\right\}$$
and $F$ be the event
$$F=\left\{\max_{j\in\{1,\dots,np_{\ell}\}}\left|\sum_{i=1}^j X_i-j\right|\geq 
n^{2/3+\delta\eps}\right\}.$$

Suppose that the CRSF configuration is formed by firstly running the walk $W_{0,0}$ until it hits its trace.
We claim that, on the event $E\cap F$, the walk $W_{0,0}$ completes a cycle before its $q_l$-th return to the line $\big\{ x = 0 \big\}$. 
To see this, note that after $p_{k}$ returns to $x=0$, the ray $R$ splits
the $x$-axis 
into intervals of lengths between $m/p_{k+1}$ and $m/p_k$.
We have
$$\frac{n^{1-\alpha\eps}}{2p_k}
<
\frac{m}{p_{k+1}}
< 
\frac{m}{n^{1/3}} n^{\beta \epsilon}
<
\frac{m}{p_k}
<
\frac{2 C n^{1+\alpha\eps}}{p_{k+1}},$$
so these intervals are
of order at least $m/p_{k+1} \geq n/p_{k+1} \geq \frac{m}{2C n^{1/3}} n^{(\beta - \alpha) \epsilon} \geq (2C)^{-1} n^{2/3 + (\beta - \alpha) \epsilon}$ and at most $m/p_k \leq Cn/p_k \leq 2C \frac{m}{n^{1/3}} n^{\beta \epsilon} n^{\alpha \epsilon} \leq  2 C^2 n^{2/3+(\alpha+\beta)\eps}$.
Due to the occurrence of  $E$, the path $W_{0,0}$ lies within $n^{2/3}$
of these points, and, due to $\beta > \alpha$, the path of $W_{0,0}$ does not intersect itself before time $n p_k$.
For the event $F$, at some point before the $p_{\ell}$-th return
there is a displacement of at least $n^{2/3+\delta\eps}$
from $R$. Since $\delta>\alpha+\beta$ the path must intersect itself. 

The number of horizontal steps in the cycle to which $(0,0)$ is rooted is at most $n p_l \leq  n^{4/3 + \gamma \epsilon}$. We have demonstrated that $W_{0,0}$ hits its trace before the first moment $j$ at which $\big\vert \sum_{i=1}^j X_i - j \big\vert \geq n^{2/3 + \delta \epsilon}$. We learn that the number of vertical steps in the cycle is at most $n p_l + n^{2/3 + \delta \epsilon} \leq 2  n^{4/3 + \gamma \epsilon}$. Hence, on the event $E \cap F$, the cycle to which $(0,0)$ is rooted has length at most $3  n^{4/3 + \gamma \epsilon}$. 

By invariance under vertical translation, the probability that there exists a cycle whose length exceeds $3  n^{4/3 + \gamma \epsilon}$ is at most $m \mathbb{P} \big( E \cap F \big)$.

With the aid of Theorem 5.23 of \cite{Petrov},
$$
\mathbb{P} \big( E  \big) \geq 1-e^{-n^{\beta\eps+o(1)}}.
$$
By  $a_{l+1} < n^{\alpha \epsilon}$ and the central limit theorem,
$$
\mathbb{P} \big( F \big) \geq 1-e^{-n^{(\gamma-\alpha-2\delta)\eps+o(1)}}.
$$
Hence, we obtain (\ref{pefeqn}).
\end{proof}

\end{document}